\begin{document}

\newtheorem{thm}{Theorem}
\newtheorem{lem}[thm]{Lemma}
\newtheorem{claim}[thm]{Claim}
\newtheorem{cor}[thm]{Corollary}
\newtheorem{prop}[thm]{Proposition} 
\newtheorem{definition}{Definition}
\newtheorem{question}[thm]{Open Question}
\newtheorem{conj}[thm]{Conjecture}
\newtheorem{prob}{Problem}
\def\vol {{\mathrm{vol\,}}}
\def\squareforqed{\hbox{\rlap{$\sqcap$}$\sqcup$}}
\def\qed{\ifmmode\squareforqed\else{\unskip\nobreak\hfil
\penalty50\hskip1em\null\nobreak\hfil\squareforqed
\parfillskip=0pt\finalhyphendemerits=0\endgraf}\fi}

\def\cA{{\mathcal A}}
\def\cB{{\mathcal B}}
\def\cC{{\mathcal C}}
\def\cD{{\mathcal D}}
\def\cE{{\mathcal E}}
\def\cF{{\mathcal F}}
\def\cG{{\mathcal G}}
\def\cH{{\mathcal H}}
\def\cI{{\mathcal I}}
\def\cJ{{\mathcal J}}
\def\cK{{\mathcal K}}
\def\cL{{\mathcal L}}
\def\cM{{\mathcal M}}
\def\cN{{\mathcal N}}
\def\cO{{\mathcal O}}
\def\cP{{\mathcal P}}
\def\cQ{{\mathcal Q}}
\def\cR{{\mathcal R}}
\def\cS{{\mathcal S}}
\def\cT{{\mathcal T}}
\def\cU{{\mathcal U}}
\def\cV{{\mathcal V}}
\def\cW{{\mathcal W}}
\def\cX{{\mathcal X}}
\def\cY{{\mathcal Y}}
\def\cZ{{\mathcal Z}}

\def\NmQR{N(m;Q,R)}
\def\VmQR{\cV(m;Q,R)}

\def\Xm{\cX_m}

\def \C {{\mathbb C}}
\def \F {{\mathbb F}}
\def \L {{\mathbb L}}
\def \K {{\mathbb K}}
\def \Q {{\mathbb Q}}
\def \R {{\mathbb R}}
\def \Z {{\mathbb Z}}
\def \fS{\mathfrak S}

\def\\{\cr}
\def\({\left(}
\def\){\right)}
\def\fl#1{\left\lfloor#1\right\rfloor}
\def\rf#1{\left\lceil#1\right\rceil}

\def\Tr{{\mathrm{Tr}}}
\def\Im{{\mathrm{Im}}}

\def \bFp {\overline \F_p}

\newcommand{\pfrac}[2]{{\left(\frac{#1}{#2}\right)}}

\def \Prob{{\mathrm {}}}
\def\e{\mathbf{e}}
\def\ep{{\mathbf{\,e}}_p}
\def\epp{{\mathbf{\,e}}_{p^2}}
\def\em{{\mathbf{\,e}}_m}

\def \bS {\mathfrak{S}}

\def\Res{\mathrm{Res}}

\def\vec#1{\mathbf{#1}}
\def\flp#1{{\left\langle#1\right\rangle}_p}

\def\mand{\qquad\mbox{and}\qquad}

\newcommand{\comm}[1]{\marginpar{%
\vskip-\baselineskip 
\raggedright\footnotesize
\itshape\hrule\smallskip#1\par\smallskip\hrule}}

\title{Double Character Sums over Subgroups and Intervals}

\author[M.-C. Chang]
{Mei-Chu Chang}
\address{Department of Mathematics, University of California,
Riverside,  CA 92521, USA}
\email{mcc@math.ucr.edu}

\author{Igor E. Shparlinski} 
\address{Department of Pure Mathematics, University of New South Wales, 
Sydney, NSW 2052, Australia}
\email{igor.shparlinski@unsw.edu.au}

\date{\today}

\begin{abstract} We estimate double sums
$$
S_\chi(a, \cI, \cG) =
\sum_{x \in \cI} \sum_{\lambda \in \cG} 
\chi(x + a\lambda), \qquad  1\le a < p-1,
$$
with a multiplicative character $\chi$ modulo $p$
where $\cI= \{1,\ldots, H\}$ and $\cG$ is a 
subgroup of order $T$ of the multiplicative 
group of the finite field of $p$ elements. 
A nontrivial upper bound on $S_\chi(a, \cI, \cG) $
can be derived from the Burgess bound if 
$H \ge p^{1/4+\varepsilon}$ and from
some standard elementary arguments if 
$T \ge p^{1/2+\varepsilon}$, where $\varepsilon>0$
is arbitrary. We obtain a nontrivial estimate 
in a wider range of parameters $H$ and $T$. 
We also estimate double sums
$$
T_\chi(a, \cG) =
\sum_{\lambda, \mu \in \cG} 
\chi(a + \lambda + \mu), \qquad  1\le a < p-1,
$$
and give an application to primitive roots modulo $p$ with 
$3$ non-zero binary digits.
\end{abstract}

\subjclass[2010]{11L40}

\keywords{character sums, intervals, multiplicative subgroups of finite 
fields}

\maketitle

\section{Introduction}

\subsection{Background and motivation}

For a prime $p$, we use $\F_p$ to denote the finite field  of $p$ elements,
which we always assume to be represented by the set $\{0, \ldots, p-1\}$.

Since the spectacular results of 
Bourgain,  Glibichuk \& Konyagin~\cite{BGK}, 
Heath-Brown \& Konyagin~\cite{HBK} and 
Konyagin~\cite{Kon}
on bounds of exponential sums 
\begin{equation}
\label{eq:ExpSum}
\sum_{\lambda \in \cG} \exp(2 \pi i a \lambda /p), \qquad 
a \in \F_p^*,
\end{equation}
over small
multiplicative subgroups $\cG$ of $\F_p^*$, there has been 
a remarkable progress in this direction, also involving 
sums over   consecutive powers $g^i$,  $i=1, \ldots, N$,
of elements $g \in \F_p^*$, see the survey~\cite{Gar}
and also very recent results of Bourgain~\cite{Bour3,Bour4}
and Shkredov~\cite{Shkr1,Shkr2}. Exponential sums 
over short segments of 
consecutive powers $g, \ldots, g^N$ of a fixed element $g \in \F_p^*$,
have also been studied, see~\cite{Kerr,KonShp} and references therein. 
However the multiplicative analogues of the sums~\eqref{eq:ExpSum}, 
that is, the sums
$$
\sum_{\lambda \in \cG} \chi(a +\lambda), \qquad 
a \in \F_p^*,
$$
with a nonprincipal multiplicative character $\chi$ of $\F_p$ 
have been resisting all attempts to improve the classical 
bound 
\begin{equation}
\label{eq:CharSum}
\left|\sum_{\lambda \in \cG} \chi(a +\lambda) \right| \le 
\sqrt{p}.
\end{equation}
Note that~\eqref{eq:CharSum} is instant from the Weil bound, 
see~\cite[Theorem~11.23]{IwKow}, if one notices
that 
$$
\sum_{\lambda \in \cG} \chi(a +\lambda) =
\frac{T}{p-1} \sum_{\mu \in \F_p^*} \chi(a +\mu^{(p-1)/T}), 
$$
where $T = \#\cG$ (but can also be obtained 
via elementary arguments). 

We now recall that Bourgain~\cite[Section~4]{Bour2} has shown that double sums
over short intervals and short segments of consecutive 
powers 
$$
\sum_{x =1}^H \sum_{n=1}^N 
\exp(2 \pi i a x g^n/p), \qquad  1\le a < p-1,
$$
can be estimated for much smaller values of $N$ than for single sums over 
consecutive powers. Here we show that similar mixing 
can also be applied to the sums of multiplicative characters 
and thus lead to nontrivial estimates of the sums 
$$
S_\chi(a, \cI, \cG) =
\sum_{x \in \cI} \sum_{\lambda \in \cG} 
\chi(x + a\lambda), \qquad  1\le a < p-1,
$$
where $\cI= \{1,\ldots, H\}$ is an interval of $H$ 
consecutive integers and $\cG\subseteq \F_p^*$ is a 
multiplicative subgroup of order $T$ for the values 
of $H$ and $T$ to which previous bounds do not apply.
More precisely, one can immediately estimate the sums
 $S_\chi(a, \cI, \cG) $ nontrivially if for some
 fixed  $\varepsilon>0$ we have  
$H \ge p^{1/4+\varepsilon}$, by using the Burgess bound,
 see~\cite[Theorem~12.6]{IwKow}, or  
$T \ge p^{1/2+\varepsilon}$, by using~\eqref{eq:CharSum}.

\subsection{Main results}

Here we obtain a nontrivial estimate 
in a wider range of parameters $H$ and $T$. 

\begin{thm} \label{thm:Bound IG1} For every fixed real $\varepsilon>0$ 
there are some $\delta> 0$ and $\eta> 0$ such that 
if $H> p^\varepsilon$ and $T > p^{1/2 - \delta}$ then 
for the interval $\cI= \{1,\ldots, H\}$ and the multiplicative subgroup
$\cG \subseteq \F_p^*$ of order $T$, we have
$$
S_\chi(a, \cI, \cG) = O(HT p^{-\eta})
$$
uniformly over $a \in \F_p^*$ and
nonprincipal  multiplicative characters $\chi$ of $\F_p$. 
\end{thm}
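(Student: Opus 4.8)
The plan is to separate the two variables according to their multiplicative and additive nature and then pass to a second moment over the subgroup. Since every $\lambda\in\cG$ is invertible, I would first write $\chi(x+a\lambda)=\overline\chi(\lambda^{-1})\chi(a+x\lambda^{-1})$ and substitute $\mu=\lambda^{-1}\in\cG$, so that
$$
S_\chi(a,\cI,\cG)=\sum_{\mu\in\cG}\overline\chi(\mu)\,V(\mu),
\qquad V(\mu)=\sum_{x\in\cI}\chi(a+x\mu).
$$
Cauchy--Schwarz over $\mu\in\cG$ then gives $|S_\chi(a,\cI,\cG)|^2\le T\,W$ with
$$
W=\sum_{\mu\in\cG}|V(\mu)|^2
=\sum_{x_1,x_2\in\cI}\ \sum_{\mu\in\cG}\chi\!\left(\frac{a+x_1\mu}{a+x_2\mu}\right).
$$
This step already carries the gain: if one can show $W\ll HT\,p^{-2\eta}$, then $|S_\chi(a,\cI,\cG)|\ll H^{1/2}T\,p^{-\eta}$, which is smaller than the trivial bound $HT$ by a factor $H^{1/2}\ge p^{\varepsilon/2}$. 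The whole problem is thereby reduced to the second-moment estimate $W\ll HT\,p^{-2\eta}$, the diagonal $x_1=x_2$ contributing exactly $HT$ to $W$.

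To treat the off-diagonal part of $W$ I would complete the interval sums. Writing $\alpha_t=\sum_{x\in\cI}\ep(-tx)$ for the Fourier coefficients of $\cI$, which satisfy $|\alpha_t|\le\min(H,\,p/(2|t|))$ and $\sum_t|\alpha_t|^2=pH$, and evaluating the inner character sums by means of the Gauss sum, one is led to an expression of the shape
$$
W=\frac1p\sum_{t_1,t_2}\alpha_{t_1}\overline{\alpha_{t_2}}\,\overline\chi(t_1)\chi(t_2)\,E\big(a(t_2-t_1)\big),
\qquad E(s)=\sum_{\lambda\in\cG}\ep(s\lambda),
$$
so that $W$ is governed by the exponential sums $E(s)$ over the subgroup. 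The term $t_1=t_2$ uses $E(0)=T$ and reproduces the diagonal contribution $\le HT$; the remaining terms are controlled by $\max_{s\neq0}|E(s)|$ together with the distribution of the weights $\alpha_t$.

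The crucial --- and hardest --- point is that a bound on $E(s)$ coming only from Parseval, that is from $\sum_s|E(s)|^2=pT$, forces $W$ down only to size $\sqrt{p\,HT}$, which \emph{exceeds} the target $HT$ exactly in the range $HT<p$ that the theorem must cover. Breaking this square-root barrier is impossible by Weil-type or $L^2$ arguments alone, and this is precisely where the multiplicative structure of $\cG$ has to be used: I would invoke the Bourgain--Glibichuk--Konyagin bound, which yields $\max_{s\neq0}|E(s)|\le T\,p^{-\kappa}$ for some $\kappa=\kappa(\delta)>0$ as soon as $T>p^{\delta}$, and combine it with the concentration of $|\alpha_t|$ near $t=0$. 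A cleaner way to make the two inputs balance is to raise $S_\chi(a,\cI,\cG)$ to a higher power $2k$ before applying Cauchy--Schwarz, so that after completion the off-diagonal is controlled by the additive energies of $\cG$, namely by the number of solutions of $\lambda_1+\dots+\lambda_k=\lambda_{k+1}+\dots+\lambda_{2k}$ with $\lambda_i\in\cG$; for such energies the sum--product circle of ideas behind~\cite{BGK} supplies bounds far stronger than the Parseval heuristic.

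I expect the main obstacle to be exactly this last step: extracting enough cancellation from $E(s)$ to overcome the square-root barrier while the interval is as short as $p^{\varepsilon}$ and $T$ is allowed to drop below $p^{1/2}$. The admissible $\delta$ and the final saving $\eta$ should emerge from optimizing the moment order $k$ and the strength $\kappa$ of the subgroup bound against $\varepsilon$, and I would expect both $\delta$ and $\eta$ to be small and to degrade as $\varepsilon\to0$.
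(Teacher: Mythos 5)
Your opening reduction (writing $\chi(x+a\lambda)=\overline\chi(\mu)\chi(a+x\mu)$ with $\mu=\lambda^{-1}$, then Cauchy--Schwarz to pass to $W=\sum_{\mu\in\cG}|V(\mu)|^2$) is sound and parallels the paper's first step, apart from an internal slip: since the diagonal $x_1=x_2$ contributes $\asymp HT$ to $W$, your stated target $W\ll HTp^{-2\eta}$ is unattainable; the correct and sufficient target is $W\ll HTp^{o(1)}$, which gives $|S_\chi|\ll H^{1/2}Tp^{o(1)}\le HTp^{-\varepsilon/2+o(1)}$. The genuine gap is that neither of your plans for the off-diagonal part of $W$ can reach even this corrected target in the theorem's range. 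In the completion route, any estimate that takes absolute values in the frequency variables and inserts a pointwise bound $|E(s)|\le B$ ($s\ne 0$) produces a bound no smaller than $\asymp pB$: the frequencies $|t_1|,|t_2|\le p/H$ alone contribute $\asymp (p/H)^2\cdot H^2\cdot B/p = pB$, because $\sum_t|\alpha_t|\asymp p\log p$ no matter how small $H$ is. To beat the diagonal you would therefore need $B\ll HT/p$, and in the critical range ($H\le p^{1/4}$ where Burgess fails, $T= p^{1/2+o(1)}$) we have $HT<p$, so you would need $B<1$; but Parseval ($\sum_{s\ne0}|E(s)|^2= pT-T^2$) forces $\max_{s\ne0}|E(s)|\gg T^{1/2}$. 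Thus even the unattainable ``optimal'' bound $|E(s)|\ll T^{1/2}$ --- let alone Bourgain--Glibichuk--Konyagin's $Tp^{-\kappa}$ with tiny $\kappa$ --- leaves you at $\gtrsim pT^{1/2}\gg HT$. The additive-energy variant has the same defect: H\"older with $\sum_s|E(s)|^{2k}=p\,E_k(\cG)$ still yields a bound $\ge p^{1+o(1)}H^{1/2k}E_k(\cG)^{1/2k}\ge p^{1+o(1)}T^{1/2}$, since trivially $E_k(\cG)\ge T^k$; no energy bound, however strong, recovers the factor $p/H$ lost in completing a short interval. Finally, if you instead raise $V(\mu)$ to the $2k$-th power before completing, you face incomplete sums $\sum_{\mu\in\cG}\chi(R(\mu))$ of rational functions over the subgroup --- precisely the sums that, as the paper's introduction stresses, resist all known methods for $T<p^{1/2}$; completing them with multiplicative characters and Weil merely reproduces the barrier $T>p^{1/2}$ you must beat. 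The structural problem is that your toolkit (Gauss sums, $E(s)$, additive energy of $\cG$) is additive, while in $S_\chi$ the interval and the subgroup couple multiplicatively.

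The idea you are missing is where the paper applies Cauchy--Schwarz and which arithmetic quantity it thereby isolates. From $TS_\chi(a,\cI,\cG)=\sum_{x\in\cI}\sum_{\lambda,\mu\in\cG}\overline\chi(\mu)\chi(\mu x+a\lambda)$, one collects the \emph{products} $u=\mu x\in\F_p$ with multiplicity $R(u)=\#\{(x,\mu)\in\cI\times\cG:\ \mu x=u\}$ and applies Cauchy--Schwarz over $u\in\F_p$. Then the character factor is the \emph{complete} second moment $\sum_{u\in\F_p}\bigl|\sum_{\lambda\in\cG}\chi(u+a\lambda)\bigr|^2\ll pT$ (Lemma~\ref{lem:DavErd2}); no cancellation in subgroup character sums is needed at all. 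The whole difficulty is shifted into the purely arithmetic count $\sum_{u}R(u)^2=T\,N(\cI,\cG)$, where $N(\cI,\cG)$ is the number of solutions of $\lambda x\equiv y\pmod p$ with $x,y\in\cI$, $\lambda\in\cG$. The sub-Weil input that actually breaks the $p^{1/2}$ barrier is the Bourgain--Konyagin--Shparlinski bound for this multiplicative congruence (Lemma~\ref{lem:NIG}): taking $\nu$ large there, and assuming $T\le p^{1/2+\varepsilon}$ (else \eqref{eq:CharSum} suffices), gives $W^2\ll H^2T^2p^{1-c/\nu}$ and hence the theorem. Nothing in your proposal substitutes for this lemma, and without it --- or an equivalent estimate on a mixed multiplicative energy of $\cI$ and $\cG$ --- the stated range $H>p^{\varepsilon}$, $T>p^{1/2-\delta}$ is out of reach.
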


We also obtain a similar estimate at the other end of 
region of $H$ and $T$, namely for a very small $T$ and 
$H$ that is still below the reach of the Burgess bound
(see~\cite[Theorem~12.6]{IwKow}). In fact in this case we
are able to estimate a more general sums
$$
\bS_\chi(f, \cI, \cG) =
\sum_{x \in \cI} \sum_{\lambda \in \cG} 
\chi(x + f(\lambda)),  
$$
with a non-constant polynomial $f\in \F_p[X]$. 

\begin{thm} \label{thm:Bound IG2} For every fixed real $\varepsilon>0$ and
integer $d\ge 1$ there are some $\delta> 0$ and $\eta> 0$ such that 
if $T> p^\varepsilon$ and $H > p^{1/4 - \delta}$ then 
for the interval $\cI= \{1,\ldots, H\}$, the multiplicative subgroup
$\cG \subseteq \F_p^*$ of order $T$, we have
$$
\bS_\chi(f, \cI, \cG) = O(HT p^{-\eta})
$$
uniformly over polynomials $f\in \F_p[X]$ of degree $d$ and
nonprincipal  multiplicative characters $\chi$ of $\F_p$. 
\end{thm}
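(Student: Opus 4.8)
The plan is to adapt the Burgess method to the double sum, treating the subgroup sum by absorbing the shifts $f(\lambda)$ into the additive part of the amplification. Write $W=\bS_\chi(f,\cI,\cG)$ and fix parameters $M,N\ge 1$ with $MN$ somewhat below $H$. For each $m\in[1,M]$ and $n\in[1,N]$, replacing $\cI$ by its translate by $mn$ changes every inner sum by $O(mn)$, so summing the resulting identity over all $(m,n)$ and using multiplicativity in the form $\chi(x+f(\lambda)+mn)=\chi(m)\chi(m^{-1}(x+f(\lambda))+n)$, I would arrive at
$$MN\,|W|\le\sum_{y\in\F_p}\nu(y)\Bigl|\sum_{n=1}^{N}\chi(y+n)\Bigr|+O\bigl((MN)^2T\bigr),$$
where $\nu(y)=\#\{(m,x,\lambda)\in[1,M]\times\cI\times\cG:\ m^{-1}(x+f(\lambda))\equiv y\pmod p\}$, so that $\sum_y\nu(y)=MHT$.

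Next I would separate the weight from the character sum by Hölder and Cauchy--Schwarz with a parameter $k$, writing $V(y)=\sum_{n\le N}\chi(y+n)$ and bounding $\sum_y\nu(y)|V(y)|$ by $(\sum_y\nu(y))^{1-1/k}(\sum_y\nu(y)^2)^{1/2k}(\sum_y|V(y)|^{2k})^{1/2k}$. The last factor is a complete one-variable sum: expanding the $2k$-th power and invoking the Weil bound gives $\sum_{y\in\F_p}|V(y)|^{2k}\ll_k N^kp+N^{2k}p^{1/2}$, which is $\ll_k N^kp$ in the admissible range $N\le p^{1/2k}$. The remaining factor is the Burgess energy $\cE:=\sum_y\nu(y)^2$, equal to the number of solutions of $m_2(x_1+f(\lambda_1))\equiv m_1(x_2+f(\lambda_2))\pmod p$ with $m_i\in[1,M]$, $x_i\in\cI$, $\lambda_i\in\cG$.

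The heart of the matter is estimating $\cE$, and this is where I expect the main obstacle to lie. The standard treatment of the balanced off-diagonal contribution (the ratios $m_2/m_1$ equidistributing) produces a term of size $(MHT)^2p^{-1+o(1)}$, which is $\ll MHT$ in our range since $MHT<p$; the diagonal $m_1=m_2$ forces $x_1+f(\lambda_1)=x_2+f(\lambda_2)$, and after fixing $(m,x_1,\lambda_1)$ each admissible $\lambda_2$ determines $x_2$ uniquely, so this part is at most $MHT\cdot B$, where
$$B=\max_{\substack{J\subset\F_p\\ \#J\le H}}\#\{\lambda\in\cG:\ f(\lambda)\in J\}$$
is the largest number of points of $\cG$ whose image under $f$ falls in an interval of length $H$. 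The crux is therefore the estimate $B\le p^{o(1)}$: in the range $HT<p^{1-\varepsilon}$ the set $f(\cG)$ should meet every short interval in very few points, and establishing this is precisely a Burgess/Stepanov-type statement on multiplicative subgroups in short intervals. It is here that the hypotheses $T>p^\varepsilon$ and $\deg f=d$ are used, the latter guaranteeing that $f$ does not collapse $\cG$ (so $\#f(\cG)\ge T/d$) and that $f(\cG)$ does not concentrate; everything else is routine. I anticipate that carrying the degree $d$ faithfully through this subgroup-interval count will be the delicate point.

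Granting $B\le p^{o(1)}$, one has $\cE\ll p^{o(1)}MHT$, and substituting the three estimates into the Hölder bound yields
$$|W|\ll\Bigl(\frac{p}{MHT}\Bigr)^{1/2k}N^{-1/2}\,HT\,p^{o(1)}+MNT.$$
Finally I would optimize the free parameters: choosing $N\asymp p^{1/2k}$ (to keep the Weil moment in its favourable range) and $M\asymp Hp^{-\eta}N^{-1}$ (so that the error term satisfies $MNT\le HTp^{-\eta}$), a short computation shows that the first term is $HT\,p^{-\varepsilon/2k+o(1)}$. Thus for $k$ fixed sufficiently large in terms of $\varepsilon$, and $\delta,\eta>0$ chosen small enough that $M\ge 1$ remains admissible, the estimate $\bS_\chi(f,\cI,\cG)=O(HTp^{-\eta})$ follows in the whole range $H>p^{1/4-\delta}$, $T>p^\varepsilon$.
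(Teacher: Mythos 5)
Your outer framework coincides with the paper's own proof: the paper also shifts by products (there $\ell r$ with $\ell$ a \emph{prime} in $[L,2L]$ and $r\le R$, rather than your $mn$), pulls the denominator out of the character, applies H\"older with a large exponent, and kills the complete moment $\sum_{v}\bigl|\sum_{r\le R}\chi(v+r)\bigr|^{2\nu}$ by the Davenport--Erd\H{o}s/Weil bound (Lemma~\ref{lem:DavErd1}). The problem is that the two estimates you then invoke for the energy $\cE=\sum_y\nu(y)^2$ --- the very steps you call ``standard'' and ``routine'' --- are precisely the content of the theorem, and neither is available. For the off-diagonal part there is no standard argument giving $(MHT)^2p^{-1+o(1)}$: the natural attempt, completing in additive characters and bounding $\sum_{\lambda\in\cG}\exp(2\pi i\,uf(\lambda)/p)\ll Tp^{-\xi}$ via Bourgain (Corollary~\ref{cor:SubgrExpSum}), only yields $\cE\ll (MHT)^2p^{-1}+M^2HT^2p^{-2\xi}$, which carries one factor $M$ too many; since Bourgain's $\xi$ is an unspecified, minuscule constant while the Burgess mechanism at $H\approx p^{1/4}$ must absorb a loss of order $(p^{1/2}/H)^{1/2k}$, this term destroys the argument. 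This off-diagonal count is exactly what the paper's key technical result (Lemma~\ref{lem:Uv}) is for, and it is not proved by equidistribution hand-waving: the paper partitions $f(\cG)$ into $O(Tp^{-\kappa/2})$ subsets that are $H$-spaced, applies Shao's bound for spaced sets (Lemma~\ref{lem:W} --- this is also why the shifts are built from \emph{primes} $\ell$), and controls the small exceptional subset by Ayyad--Cochrane--Zheng (Lemma~\ref{lem:SymCong}).

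The second gap is worse: your reduction of the diagonal to $B=\max_{J}\#\{\lambda\in\cG:\ f(\lambda)\in J\}\le p^{o(1)}$ replaces the theorem by an open problem. Even for $f(X)=X$, no bound of the shape $p^{o(1)}$ is known for the number of elements of a multiplicative subgroup of order $T\approx p^{1/2}$ in an interval of length $p^{1/4}$; Stepanov-type methods and the results of~\cite{BKS1,BKS2} give only power savings, and the fact that $\#f(\cG)\ge T/d$ gives no non-concentration at all. What is actually provable --- and what the paper proves and uses via Bourgain's sparse exponential sum bound plus Erd\H{o}s--Tur\'an (Lemma~\ref{lem:G in I}) --- is
$$
\#\{\lambda\in\cG:\ f(\lambda)\in J\}=\frac{HT}{p}+O\(T^{1-\kappa}\),
$$
a saving only relative to $T$. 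Consequently the best provable energy bound (Lemma~\ref{lem:Uv}) is $\ll HLT^2p^{-\eta}$, weaker than your $\cE\ll MHTp^{o(1)}$ by essentially a factor $T$; the paper's proof closes anyway because $HL$ is close to $p^{1/2}$ (this is where the hypothesis $H>p^{1/4-\delta}$ is genuinely consumed) and the H\"older exponent is taken large. So your final optimization is carried out with inputs you cannot supply; once they are replaced by the provable ones, reconstructing the argument is exactly the paper's proof, and the substance of that proof is missing from the proposal.
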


We also give an explicit version of Theorem~\ref{thm:Bound IG1}
in the case when $H = p^{1/4+o(1)}$ and 
$T = p^{1/2+o(1)}$, that is, when other methods
just start to fail. 

\begin{thm} \label{thm:Bound IG Expl} Let $H = p^{1/4+o(1)}$ and 
$T = p^{1/2+o(1)}$. Then 
for the interval $\cI= \{1,\ldots, H\}$ and the multiplicative subgroup
$\cG \subseteq \F_p^*$ of order $T$, we have
$$
|S_\chi(a, \cI, \cG) | \le HT p^{-5/48 + o(1)}
$$
uniformly over $a \in \F_p^*$ and
nonprincipal  multiplicative characters $\chi$ of $\F_p$. 
\end{thm}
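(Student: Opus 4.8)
The plan is to exploit the multiplicative structure of $\cG$ to symmetrize the sum, and then to reduce---via completion and the Weil bound for complete sums---to a purely combinatorial count: a multiplicative energy between the interval $\cI$ and the subgroup $\cG$. The essential feature of the critical regime $H=p^{1/4+o(1)}$, $T=p^{1/2+o(1)}$ is that neither elementary input survives: the Burgess bound saves nothing for intervals of length $p^{1/4}$, and the Weil bound for the subgroup sums~\eqref{eq:ExpSum} becomes trivial once $T$ reaches $p^{1/2}$. Hence all of the cancellation has to be extracted from the arithmetic of $\cG$ itself, and the explicit exponent will be the output of an optimization rather than of a single clean estimate.

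First I would symmetrize. Since $\chi$ is completely multiplicative and $t\cG=\cG$ for every $t\in\cG$, we have $\chi(t)S_\chi(a,\cI,\cG)=\sum_{x\in\cI}\sum_{\lambda\in\cG}\chi(tx+a\lambda)$, so that $|S_\chi(a,\cI,\cG)|=\bigl|\sum_{x\in\cI}\sum_{\lambda\in\cG}\chi(tx+a\lambda)\bigr|$ for each $t\in\cG$. Averaging over $t\in\cG$, applying the Cauchy--Schwarz inequality, and extending the outer variable from $\cG$ to all of $\F_p$ (the summands being nonnegative) gives
$$
|S_\chi(a,\cI,\cG)|^2\ \le\ \frac1T\sum_{t\in\F_p}\ \Bigl|\sum_{x\in\cI}\sum_{\lambda\in\cG}\chi(tx+a\lambda)\Bigr|^2 .
$$
Expanding the square and summing first over $t$, each inner sum is a complete character sum $\sum_{t\in\F_p}\chi\bigl((tx_1+a\lambda_1)/(tx_2+a\lambda_2)\bigr)$ of a ratio of two linear forms in $t$ (here $x_2\ge 1\neq 0$); by the Weil bound it is $O(1)$ unless the two forms are proportional, i.e. $x_1\lambda_2=x_2\lambda_1$, in which case its modulus is at most $p$. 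Writing $E^{\times}(\cI,\cG)=\#\{(x_1,x_2,\lambda_1,\lambda_2)\in\cI^2\times\cG^2:\ x_1\lambda_2=x_2\lambda_1\}$ and noting $E^{\times}(\cI,\cG)=T\,J$ with $J=\#\{(x_1,x_2)\in\cI^2:\ x_1x_2^{-1}\in\cG\}$, this yields the clean reduction $|S_\chi(a,\cI,\cG)|^2\ll pJ+H^2T$.

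The heart of the matter is now an incidence estimate for $J$. Since $\cG$ is exactly the group of $T$-th roots of unity, $J$ counts the pairs $(x_1,x_2)\in\cI^2$ with $x_1^T\equiv x_2^T\pmod p$, equivalently the number of points of the cosets $x_2^{-1}\cG$ in the short interval $\cI$, summed over $x_2\in\cI$. This is where I expect the main obstacle to be, and where the genuinely arithmetic input enters: the diagonal already forces $J\ge H$, the heuristic size is $J=H^{1+o(1)}$, the trivial bound $J\le H^2$ gives nothing, and the elementary route $J\le (T\,E^{\times}(\cI))^{1/2}\ll T^{1/2}H^{1+o(1)}$ (using that the multiplicative energy of $\cI$ is $H^{2+o(1)}$ by the divisor bound) is \emph{exactly} trivial at $T=p^{1/2}$. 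One is therefore forced to invoke the sum--product and Stepanov-type estimates controlling the concentration of a multiplicative subgroup in short intervals---the same machinery underlying the qualitative Theorem~\ref{thm:Bound IG1}---but now in quantitative form at the boundary parameters, which I expect to give a bound of the shape $J\le H^{7/6+o(1)}=p^{7/24+o(1)}$.

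Feeding this in, and noting that $H^2T=p^{1+o(1)}$ is dominated by $pJ=p^{31/24+o(1)}$, we obtain $|S_\chi(a,\cI,\cG)|\le p^{31/48+o(1)}$; since $HT=p^{36/48+o(1)}$ this is precisely $|S_\chi(a,\cI,\cG)|\le HT\,p^{-5/48+o(1)}$. In practice the cleanest organization is to run the proof of Theorem~\ref{thm:Bound IG1} verbatim, substituting the explicit Weil constant and the explicit subgroup-concentration estimate, and then to optimize the single free parameter (the moment used in bounding $J$); the slightly unusual value $5/48$ is the outcome of this optimization at $H=p^{1/4}$, $T=p^{1/2}$, which is exactly the balance point where the interval is too short for Burgess and the subgroup too large for a nontrivial Weil bound.
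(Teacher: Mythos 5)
Your proposal is correct and essentially identical to the paper's own proof: the paper performs the same symmetrization over $\cG$ followed by the Cauchy--Schwarz inequality and the complete-sum evaluation (its Lemma~\ref{lem:DavErd2}), reducing the problem to $|S_\chi(a, \cI, \cG)|^2 \ll p\,N(\cI,\cG)$, where $N(\cI,\cG)$ is exactly your quantity $J$, and then optimizes a free moment parameter $\nu$, just as you describe. The one step you could only ``expect'', namely $J \le H^{7/6+o(1)} = p^{7/24+o(1)}$, is precisely the paper's quoted Lemma~\ref{lem:NIG} (due to Bourgain, Konyagin and Shparlinski~\cite{BKS1}) applied with $\nu = 2$ and $t = \max\{T, p^{1/2}\} = p^{1/2+o(1)}$, which gives $N(\cI,\cG) \le H t^{5/12} p^{-1/6 + o(1)} + H^2 t^{1/2} p^{-1/2+o(1)} = p^{7/24+o(1)}$; so your guessed exponent is exactly right, and supplying that citation (rather than new Stepanov-type work) completes your argument.
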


Furthermore, we also consider double sums 
$$
T_\chi(a, \cG) =
 \sum_{\lambda,\mu \in \cG} 
\chi(a + \lambda+\mu), \qquad  1\le a < p-1,
$$
where both variables
run over a  multiplicative subgroup
$\cG \subseteq \F_p^*$.

Using recent estimates of Shkredov~\cite{Shkr1} on the so-called
{\it additive energy\/} of multiplicative 
subgroups we also estimate them below the obvious range $T \ge p^{1/2}$, 
where $T = \# \cG$, given by the estimate 
$$
|T_\chi(a,  \cG) | \le T p^{1/2}, 
$$
which follows from~\eqref{eq:CharSum}.

\begin{thm} \label{thm:Bound GG} Let  
$T \le p^{2/3}$. Then 
for  the multiplicative subgroup
$\cG \subseteq \F_p^*$ of order $T$, we have
$$
|T_\chi(a, \cG) | \le 
\left\{ \begin{array}{ll}
T^{19/26} p^{1/2 +o(1)},&  \textrm{if $T \le p^{1/2}$},\\
T^{9/13} p^{27/52+o(1)},&  \textrm{if $p^{1/2} < T \le p^{29/48}$},\\
T p^{1/3+o(1)},&  \textrm{if $p^{29/48} < T \le p^{2/3}$},\\
\end{array} \right.
$$
uniformly over $a \in \F_p^*$ and
nonprincipal  multiplicative characters $\chi$ of $\F_p$. 
\end{thm}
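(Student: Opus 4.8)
The plan is to linearise the additive structure of the inner sum and then recover the cancellation of $\chi$ through the multiplicative invariance of $\cG$. Writing $r(\nu)=\#\{(\lambda,\mu)\in\cG\times\cG:\lambda+\mu=\nu\}$ for the representation function of the sumset $\cG+\cG$, one has $T_\chi(a,\cG)=\sum_{\nu\in\F_p}r(\nu)\chi(a+\nu)$, where $\sum_\nu r(\nu)=T^2$ and $\sum_\nu r(\nu)^2=E(\cG)$ is exactly the additive energy of $\cG$ controlled by Shkredov~\cite{Shkr1}. A naive Cauchy--Schwarz against $r$ would discard all cancellation of $\chi$, so first I would exploit that $t\cG=\cG$ for every $t\in\cG$: replacing $(\lambda,\mu)$ by $(t\lambda,t\mu)$ and averaging over $t\in\cG$ gives
\[
T_\chi(a,\cG)=\frac1T\sum_{\nu\in\F_p}r(\nu)W(\nu),\qquad W(\nu)=\sum_{t\in\cG}\chi(a+t\nu),
\]
so that each $W(\nu)$ is now a multiplicative character sum over the coset $\nu\cG$, to which Weil-type estimates apply.

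Since $\sum_\nu W(\nu)=0$, I would subtract the mean and replace $r(\nu)$ by $r(\nu)-T^2/p$; this is harmless for small $T$ but crucial for large $T$, where $E(\cG)$ is dominated by its ``random'' main term $T^4/p$ and only the excess energy carries information. Cauchy--Schwarz then yields
\[
|T_\chi(a,\cG)|^2\le\frac1{T^2}\Bigl(E(\cG)-\frac{T^4}{p}\Bigr)\sum_{\nu\in\F_p}|W(\nu)|^2 .
\]
Expanding the second moment and applying the Weil bound to each complete sum $\sum_\nu\chi(a+t_1\nu)\overline{\chi(a+t_2\nu)}$ with $t_1\ne t_2$ gives $\sum_\nu|W(\nu)|^2=Tp+O(T^2p^{1/2})$. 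In the range $T\le p^{1/2}$ the term $Tp$ dominates, and inserting Shkredov's energy bound of the shape $E(\cG)\le T^{32/13+o(1)}$ produces $|T_\chi(a,\cG)|\le (E(\cG)p/T)^{1/2}\le T^{19/26}p^{1/2+o(1)}$, which is the first case.

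For $T>p^{1/2}$ the transition in the Weil remainder ($T^2p^{1/2}$ overtaking $Tp$ at $T=p^{1/2}$) forces a more careful treatment, and here the crude termwise Weil bound on $\sum_\nu|W(\nu)|^2$ is genuinely lossy: the extreme case $\cG=\F_p^*$ shows that the true second moment can be far below $T^2p^{1/2}$ because of cancellation among the pairs $t_1,t_2\in\cG$. To capture this I would return to the weighted form $|T_\chi(a,\cG)|^2\le\sum_\nu r(\nu)|W(\nu)|^2$ and substitute $c=t_2/t_1\in\cG$; the $c=1$ contribution is the diagonal $\approx T^3$, and the off-diagonal reduces to a twisted sum $\sum_{c\in\cG\setminus\{1\}}\sum_\nu r(\nu)\chi(a+\nu)\overline{\chi(a+c\nu)}$ that is structurally a dilated copy of $T_\chi$ itself. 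Bounding this self-similar sum brings the additive energy of $\cG$ in a second time, and balancing it against the diagonal $T^3$ (which is admissible precisely because $T^3\le T^2p^{2/3}$ for $T\le p^{2/3}$) is what yields the exponent $p^{1/3}$ in the range $p^{29/48}<T\le p^{2/3}$. The intermediate range $p^{1/2}<T\le p^{29/48}$ comes from optimising Shkredov's several energy estimates against this refined bound, the breakpoint $p^{29/48}$ being the point at which the optimal energy estimate switches.

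The hard part will be step three: securing a second-moment bound for the coset sums $W(\nu)$ that is sharp enough throughout $p^{1/2}<T\le p^{2/3}$, since in that window the off-the-shelf Weil estimate already exceeds the target $T^2p^{2/3}$, and it must be superseded by the additive-combinatorial structure of $\cG$ via the twisted sum above. Getting the three energy inputs, the diagonal term, and the Weil remainder to interlock so that the exponents $19/26$, $9/13$, $27/52$ and the thresholds $p^{1/2}$, $p^{29/48}$, $p^{2/3}$ all line up is the delicate optimisation; everything else is the reduction and routine application of~\eqref{eq:CharSum} and the Weil bound.
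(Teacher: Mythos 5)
Your first half is essentially the paper's argument: the amplification over $t\in\cG$ (the paper writes it with a twist $\overline\chi(\vartheta)\chi(a\vartheta+u)$, which after the substitution $t=\vartheta^{-1}$ is exactly your $W(\nu)$), the collection into the representation function $r(\nu)$, Cauchy--Schwarz, and Shkredov's energy bound. However, there is a genuine gap, and it occurs precisely where you locate "the hard part". Your estimate
$$
\sum_{\nu\in\F_p}|W(\nu)|^2=Tp+O\bigl(T^2p^{1/2}\bigr)
$$
mis-applies the Weil bound: each off-diagonal complete sum evaluates \emph{exactly}, not just up to $O(p^{1/2})$. For $t_1\ne t_2$, substituting $\nu'=a+t_2\nu$ and then $w=c/\nu'$ with $c=a(t_2/t_1-1)\ne 0$ gives
$$
\sum_{\nu\in\F_p}\chi(a+t_1\nu)\overline\chi(a+t_2\nu)
=\chi(t_1/t_2)\sum_{\nu'\ne 0}\chi\bigl(1+c/\nu'\bigr)=-\chi(t_1/t_2),
$$
so each off-diagonal term is $O(1)$, not $O(p^{1/2})$; this elementary evaluation is the content of the paper's Lemma~\ref{lem:DavErd2}. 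Hence
$$
\sum_{\nu\in\F_p}|W(\nu)|^2 = T(p-1)+O(T^2)\ll Tp
$$
uniformly for all $T\le p^{2/3}$, with no transition at $T=p^{1/2}$ whatsoever.

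Because of this over-estimate you concluded that the simple second-moment argument fails for $T>p^{1/2}$ and proposed, for the two upper ranges, a "self-similar" reduction to a dilated copy of $T_\chi$ together with a delicate optimisation --- none of which is actually carried out, so cases two and three of the theorem remain unproven in your write-up. In fact no such machinery is needed: with the corrected second moment, the single inequality
$$
|T_\chi(a,\cG)|^2\le \frac{1}{T^2}\,E(\cG)\sum_{\nu\in\F_p}|W(\nu)|^2\ll \frac{E(\cG)\,p}{T}
$$
yields all three cases of the theorem at once, by inserting the three cases of Shkredov's bound (Lemma~\ref{lem:EG}): the breakpoints $p^{1/2}$ and $p^{29/48}$ come entirely from the energy estimate, not from any change in the character-sum analysis. (Your mean-subtraction $r(\nu)\mapsto r(\nu)-T^2/p$ is legitimate but buys nothing here, since the stated energy bounds are what is used.) Your instinct that the termwise Weil bound is lossy was correct; the fix is the exact evaluation above, not a new argument.
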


Note that Theorem~\ref{thm:Bound GG}  nontrivial 
provided that $T \ge p^{13/33+\varepsilon}$ for some fixed $\varepsilon > 0$.

We also give an application of Theorem~\ref{thm:Bound GG} to primitive 
roots modulo $p$ with few non-zero binary digits. 
More precisely, let $u_p$ denote the smallest $u$ such that 
there exists a primitive root modulo $p$ with $u_p$ non-zero binary digits.
It is shows in~\cite[Theorem~5]{DES} that $u_p\le 2$ for all but $o(Q/\log Q)$
primes $p\le Q$, as $Q\to \infty$ (note that in~\cite{DES} the result 
is formulated only for quadratic non-residues but it is easy to see
that the argument also holds for primitive roots). Instead of $o(Q/\log Q)$, 
can obtain a slightly
more explicit but still rather weak bound on the size of the exceptional set.
Here we show that Theorem~\ref{thm:Bound GG} implies a rather 
strong bound on the set of primes $p\le Q$ for which $u_p \le 3$
does not hold.

\begin{thm} \label{thm:up} 
For all but at most $Q^{26/33+o(1)}$ primes $p \le Q$, we have 
$u_p \le 3$ as $Q\to \infty$. 
\end{thm}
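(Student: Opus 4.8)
The plan is to link primitive roots with three nonzero binary digits to the sums $T_\chi(a,\cG)$ through the subgroup $\cG=\langle 2\rangle\subseteq\F_p^*$ generated by $2$, of order $T=\mathrm{ord}_p(2)$. The key observation is that every integer of the shape $1+2^b+2^c$ has at most three nonzero binary digits, and reduces modulo $p$ to $1+\lambda+\mu$ with $\lambda=2^b$, $\mu=2^c$; as $b,c$ run through a full period these cover $\cG$. Hence it suffices to prove that
\[
N=\#\{(\lambda,\mu)\in\cG\times\cG:\ 1+\lambda+\mu\ \text{is a primitive root modulo }p\}
\]
is positive, since any admissible pair produces an integer with at most three binary digits that is a primitive root, giving $u_p\le 3$. (Coincidences among the three binary positions only lower the digit count and are harmless for the ``exists'' direction.)

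First I would expand $N$ via the standard primitive-root detecting combination of multiplicative characters, which after interchanging summation gives
\[
N=\frac{\varphi(p-1)}{p-1}\sum_{d\mid p-1}\frac{\mu(d)}{\varphi(d)}\sum_{\mathrm{ord}(\chi)=d}\ \sum_{\lambda,\mu\in\cG}\chi(1+\lambda+\mu).
\]
The term $d=1$ yields the main term $\tfrac{\varphi(p-1)}{p-1}\bigl(T^2+O(T)\bigr)$, the error $O(T)$ accounting for the at most $T$ pairs with $1+\lambda+\mu\equiv 0\pmod p$. For nonprincipal $\chi$ the inner double sum is exactly $T_\chi(1,\cG)$, and bounding the number of squarefree divisors of $p-1$ by $p^{o(1)}$ (together with $\varphi(p-1)/(p-1)=p^{-o(1)}$) shows that the combined contribution of the terms $d>1$ is at most $p^{o(1)}\max_{\chi\neq\chi_0}|T_\chi(1,\cG)|$.

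Next I would insert Theorem~\ref{thm:Bound GG}. Comparing the main term $T^2$ with each of the three bounds shows $N>0$ as soon as $T\ge p^{13/33+\varepsilon}$: in the first regime the requirement $T^2>T^{19/26}p^{1/2+o(1)}$ is equivalent to $T^{33/26}>p^{1/2+o(1)}$, i.e. $T>p^{13/33+o(1)}$, which is the binding constraint; the second and third regimes, as well as the range $T>p^{2/3}$ handled by the Weil-type bound $|T_\chi(1,\cG)|\le Tp^{1/2}$ from~\eqref{eq:CharSum}, impose only the weaker conditions $T>p^{1/2}$ or $T>p^{1/3}$, automatically satisfied there. Thus $u_p\le 3$ holds for every prime $p$ with $\mathrm{ord}_p(2)\ge p^{13/33+\varepsilon}$.

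Finally I would bound the exceptional set. A prime $p\le Q$ can escape the above only if $\mathrm{ord}_p(2)<p^{13/33+\varepsilon}\le Q^{13/33+\varepsilon}$. For each $t$ the primes with $\mathrm{ord}_p(2)=t$ all divide $2^t-1$, so their number is at most $\omega(2^t-1)\le\log_2(2^t-1)<t$, whence the number of primes $p\le Q$ with $\mathrm{ord}_p(2)\le z$ is at most $\sum_{t\le z}t=O(z^2)$. Taking $z=Q^{13/33+\varepsilon}$ gives $O(Q^{26/33+2\varepsilon})$ exceptional primes, and letting $\varepsilon\to 0$ produces the claimed bound $Q^{26/33+o(1)}$; the $O(Q^{1/2})$ primes too small for the asymptotics are absorbed into this count. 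I expect the main obstacle to be essentially bookkeeping rather than conceptual: verifying that the main term $T^2$ dominates the error uniformly across all three regimes of Theorem~\ref{thm:Bound GG} with the threshold matching exactly the exponent $13/33$, and confirming the elementary estimate for the number of primes with prescribed small multiplicative order of $2$.
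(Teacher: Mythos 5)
Your proposal is correct and follows essentially the same route as the paper's proof: detect primitive roots among the values $1+\lambda+\mu$, $\lambda,\mu\in\langle 2\rangle$, via the standard character-sum indicator, apply Theorem~\ref{thm:Bound GG} (plus the Weil-type bound~\eqref{eq:CharSum} for $T>p^{2/3}$) to see that the main term $T^2$ dominates once $\mathrm{ord}_p(2)\ge p^{13/33+\varepsilon}$, and then bound the exceptional primes by counting prime divisors of the numbers $2^t-1$, $t\le Q^{13/33+\varepsilon}$, which gives $O(Q^{26/33+2\varepsilon})$. Your write-up is in fact more explicit than the paper's (which leaves the detection formula and the $T>p^{2/3}$ regime implicit), but the decomposition, the key input, and the counting of exceptional primes are the same.
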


We also note that one may attempt to treat the sums $S_\chi(a, \cI, \cG)$ and 
$T_\chi(a, \cG)$ 
within the general theory of double sums of multiplicative 
characters, see~\cite{BGKS1,BGKS2,BKS3,Chang,FrIw,Kar1,Kar2,Kar3}
and references therein. However it seems that none of the presently 
known results implies a nontrivial estimate in the range
of Theorems~\ref{thm:Bound IG1} and~\ref{thm:Bound GG}.

\section{Preparations}

\subsection{Notation and general conventions}

Throughout the paper, $p$ always denotes a sufficiently 
large prime number and $\chi$ denotes an non-principal 
multiplicative character modulo $p$. We assume that $\F_p$ is
represented by the set $\{0, \ldots, p-1\}$. 

Furthermore, $\cG$ always denotes a multiplicative subgroup of $\F_p^*$
of order $\# \cG = T$ and $\cI$ always denotes the set
$\cI= \{1,\ldots, H\}$.  

We also assume that $f \in \F_p[X]$ is a of degree $d \ge 1$.
In particular, $f$ is not a constant. 

The notations $U = O(V)$ and $U \ll V$   are both
equivalent to the inequality $|U| \le c\,V$ 
with some constant $c> 0$ that may depend on 
the real parameter $\varepsilon > 0$ and the integer parameters $d\ge 1$
and $\nu \ge 1$
and is absolute otherwise.

In particular, all our estimates are uniform with respect to the polynomial
$f$ and the character $\chi$.

\subsection{Bounds of some exponential and character sums}

First we recall the classical result of Davenport and 
Erd{\H o}s~\cite{DavErd}, which 
 follows from the Weil bound of multiplicative character sums,
see~\cite[Theorem~11.23]{IwKow}.

\begin{lem}
\label{lem:DavErd1} 
For a fixed integer $\nu \ge 1$ and an 
integer $R < p$, we have
$$
 \sum_{v \in \F_p}  
 \left|\sum_{r =1}^R
\chi\(v + r\)\right|^{2\nu} \ll R^{2\nu} p^{1/2} + R^\nu p.
$$
\end{lem}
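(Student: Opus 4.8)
The plan is to reduce the moment $\sum_{v}|\sum_{r}\chi(v+r)|^{2\nu}$ to counting solutions of a polynomial congruence and then to apply the Weil bound to the resulting character sum. I would begin by expanding the $2\nu$-th power as a product of $\nu$ factors with $\chi$ and $\nu$ factors with the complex conjugate $\overline\chi$, writing
$$
\left|\sum_{r=1}^R \chi(v+r)\right|^{2\nu}
= \sum_{r_1,\ldots,r_\nu=1}^R \sum_{s_1,\ldots,s_\nu=1}^R
\chi\!\left(\prod_{i=1}^\nu (v+r_i)\right)
\overline\chi\!\left(\prod_{j=1}^\nu (v+s_j)\right).
$$
Summing over $v\in\F_p$ then interchanges the order of summation so that the inner object is a one-variable sum $\sum_{v\in\F_p}\chi(F(v))$, where $F(X)=\prod_i(X+r_i)\prod_j(X+s_j)^{p-2}$ (equivalently $\chi(\prod(v+r_i))\,\overline\chi(\prod(v+s_j))$), indexed by the $2\nu$-tuples $(r_1,\ldots,r_\nu,s_1,\ldots,s_\nu)$.

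Next I would split the tuples into two classes according to whether the multisets $\{r_1,\ldots,r_\nu\}$ and $\{s_1,\ldots,s_\nu\}$ coincide. When they coincide, the product $\prod(v+r_i)/\prod(v+s_j)$ is a perfect power (indeed identically $1$ as a rational function away from the poles), so $\chi$ of it is principal and the inner sum over $v$ contributes $O(p)$; the number of such diagonal tuples is $O(R^\nu)$, giving the term $R^\nu p$. When the two multisets differ, $F$ is not a constant multiple of a perfect $\mathrm{ord}(\chi)$-th power, so the Weil bound for multiplicative character sums (\cite[Theorem~11.23]{IwKow}) applies and yields $\left|\sum_{v}\chi(F(v))\right|\ll p^{1/2}$, the implied constant depending only on the degree $2\nu$ of $F$. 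There are at most $R^{2\nu}$ off-diagonal tuples, contributing $R^{2\nu}p^{1/2}$ in total, and adding the two contributions gives the claimed bound $R^{2\nu}p^{1/2}+R^\nu p$.

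The main obstacle is verifying cleanly that in the off-diagonal case the polynomial $F$ is genuinely not of the form $c\,g(X)^m$, where $m=\mathrm{ord}(\chi)$, which is precisely the hypothesis required to invoke the Weil bound nontrivially. Since $r_i,s_j$ range over $\{1,\ldots,R\}$ with $R<p$, the linear factors $X+r_i$ and $X+s_j$ are distinct as elements of $\F_p[X]$ whenever the corresponding residues differ, so the multiset of roots of $F$ (counted with the signs coming from $\chi$ versus $\overline\chi$) fails to be $m$-divisible exactly when the two multisets $\{r_i\}$ and $\{s_j\}$ are unequal. I would make this precise by grouping repeated factors and checking the residue of each root modulo $m$; this is a routine but essential bookkeeping step. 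Everything else is a direct combination of the expansion, the diagonal count, and the uniform Weil estimate, with the implied constant absorbing the dependence on the fixed parameter $\nu$.
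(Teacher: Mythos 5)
Your overall strategy---expand the $2\nu$-th moment, isolate a class of degenerate tuples, and apply the Weil bound to the rest---is exactly what the paper intends (it gives no proof of this lemma, citing Davenport--Erd\H{o}s and~[IwKow, Theorem~11.23]). But there is a genuine error, and it sits precisely in the step you defer as ``routine but essential bookkeeping.'' You assert that $F$ fails to be a constant times an $m$-th power ($m=\mathrm{ord}(\chi)$) \emph{exactly} when the multisets $\{r_i\}$ and $\{s_j\}$ differ. This equivalence is false whenever $m\le \nu$. Take $\chi$ the quadratic character, $\nu=2$, $(r_1,r_2)=(1,1)$ and $(s_1,s_2)=(2,2)$: the multisets differ, yet
$$
\sum_{v\in\F_p}\chi\left((v+1)^2\right)\,\overline\chi\left((v+2)^2\right)=p-2,
$$
since the summand equals $1$ except at $v=-1,-2$. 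So this ``off-diagonal'' tuple contributes $p-2$, not $O(p^{1/2})$, and your estimate for the off-diagonal class is simply wrong as stated. This regime is not exotic for the paper: the lemma is applied there with $\nu$ chosen large and $\chi$ an arbitrary nonprincipal character, including characters of order $\le\nu$.

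The fix keeps your structure but requires both a correct dichotomy and a new counting argument. The degenerate class must be defined as those tuples for which, at every $a\in\F_p$, the multiplicity $e(a)$ of $a$ among the $r_i$ is congruent modulo $m$ to its multiplicity $f(a)$ among the $s_j$ (this is exactly the condition that $F$ is a constant times an $m$-th power); only outside this class does Weil give $O_\nu(p^{1/2})$. One must then show this \emph{larger} class still has $O_\nu(R^\nu)$ elements: if $e(a)\ne f(a)$ then $|e(a)-f(a)|\ge m\ge 2$, so every residue occurring in a degenerate tuple satisfies $e(a)+f(a)\ge 2$; since $\sum_a (e(a)+f(a))=2\nu$, at most $\nu$ distinct residues occur, giving $O_\nu(R^\nu)$ degenerate tuples, each contributing at most $p$. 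With that correction the bound $R^{2\nu}p^{1/2}+R^\nu p$ follows. A cosmetic point in the same spirit: quote Weil's bound with the constant depending on the number of distinct roots of $F$ (at most $2\nu$), not on $\deg F$, since your $F$ has degree roughly $\nu(m-1)$, which is unbounded.
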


The following result is a version of Lemma~\ref{lem:DavErd1} 
with $\nu=1$ which is slightly more precise in this case. 

\begin{lem}
\label{lem:DavErd2} 
 For any set $\cV \subseteq \F_p$ and 
 complex numbers $\alpha_v$ of
such that  $|\alpha_v| \le 1$ for $v \in \cV$, 
we have 
$$
\sum_{u \in \F_p} \left|\sum_{v \in \cV} \chi(u+v) \right|^{2} 
\ll \#\cV p.
$$
\end{lem}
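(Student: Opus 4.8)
The plan is to prove this by the standard device of expanding the square and reducing to a complete character sum over $\F_p$. Since the hypothesis $|\alpha_v|\le 1$ signals that the quantity to be bounded should carry the weights $\alpha_v$, I shall estimate
$$
\sum_{u\in\F_p}\left|\sum_{v\in\cV}\alpha_v\chi(u+v)\right|^2,
$$
of which the displayed unweighted sum is the special case $\alpha_v=1$. First I would open the modulus and interchange the order of summation, writing
$$
\sum_{u\in\F_p}\left|\sum_{v\in\cV}\alpha_v\chi(u+v)\right|^2
=\sum_{v_1,v_2\in\cV}\alpha_{v_1}\overline{\alpha_{v_2}}\,
W(v_1,v_2),
\qquad
W(v_1,v_2)=\sum_{u\in\F_p}\chi(u+v_1)\overline{\chi(u+v_2)} .
$$
The crux of the argument is then the evaluation of the inner complete sum $W(v_1,v_2)$, and this is where the only real work lies.

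To evaluate $W(v_1,v_2)$ I would split into the diagonal and off-diagonal cases. When $v_1=v_2$ the summand equals $|\chi(u+v_1)|^2$, which is $1$ for $u\neq -v_1$ and $0$ otherwise, so $W(v_1,v_1)=p-1$. When $v_1\neq v_2$ I would use $\overline{\chi(z)}=\chi(z^{-1})$ for $z\neq 0$ to rewrite the summand as $\chi\!\left((u+v_1)(u+v_2)^{-1}\right)$ and then apply the substitution $w=(u+v_1)(u+v_2)^{-1}$. Because $v_1\neq v_2$ this fractional-linear map is a bijection from $\F_p\setminus\{-v_2\}$ onto $\F_p\setminus\{1\}$ (the pole $u=-v_2$ corresponds to $w=\infty$ and the omitted point $u=\infty$ to $w=1$), so that
$$
W(v_1,v_2)=\sum_{w\neq 1}\chi(w)=-\chi(1)=-1,
$$
using the orthogonality relation $\sum_{w\in\F_p}\chi(w)=0$ for nonprincipal $\chi$.

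Finally I would assemble the two contributions. The diagonal terms give $(p-1)\sum_{v}|\alpha_v|^2$ and the off-diagonal terms give
$$
-\sum_{v_1\neq v_2}\alpha_{v_1}\overline{\alpha_{v_2}}
=\sum_v|\alpha_v|^2-\left|\sum_v\alpha_v\right|^2,
$$
so the whole expression collapses to $p\sum_v|\alpha_v|^2-\left|\sum_v\alpha_v\right|^2\le p\sum_v|\alpha_v|^2\le p\,\#\cV$. This yields the claimed bound with explicit constant $1$; in the unweighted case it even gives the exact value $\#\cV\,p-(\#\cV)^2$. There is no genuine obstacle here: the argument is purely a complete-sum computation, and the only point demanding a little care is verifying that the fractional-linear substitution is a bijection with the single omitted value $w=1$, together with the harmless observation that the zero of $\chi$ at $u=-v_1$ is automatically accounted for, since there $w=0$ and $\chi(0)=0$.
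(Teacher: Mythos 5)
Your proof is correct and follows essentially the same route as the paper: expand the square, evaluate the diagonal terms as $p-1$, and reduce each off-diagonal complete sum to $-\chi(1)$ by a multiplicative change of variable (your fractional-linear substitution $w=(u+v_1)(u+v_2)^{-1}$ is just a repackaging of the paper's shift-then-invert step). The only differences are cosmetic: you keep the exact identity $p\sum_v|\alpha_v|^2-\bigl|\sum_v\alpha_v\bigr|^2$ where the paper settles for $O(\#\cV\,p)+O((\#\cV)^2)$, and you correctly reinstate the weights $\alpha_v$ inside the sum, which the paper's statement omits by an evident typo.
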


\begin{proof} Denoting by $\overline \chi$ the conjugate character and 
recalling that $\overline \chi(w) = \chi(w^{-1})$ for $w \in \F_p^*$, we
obtain
$$
\sum_{u \in \F_p} \left|\sum_{v \in \cV} \chi(u+v) \right|^{2} 
= \sum_{v,w \in \cV}  \alpha_v \overline  \alpha_w  \sum_{u \in \F_p} \chi(u+v) \overline \chi(u+w).
$$
If $v =w$ the inner sum is equal to $p-1$. So the total contribution from 
such terms is $O(Mp)$. Otherwise, we derive
\begin{equation*}
\begin{split}
 \sum_{u \in \F_p} \chi(u+v) &\overline \chi(u+w) = 
  \sum_{u \in \F_p} \chi(u+v -w) \overline \chi(u)\\
&= \sum_{u \in \F_p^*} \chi(u+v -w) \overline \chi(u)
= \sum_{u \in \F_p^*} \chi\(1+(v -w)u^{-1}\) \\
&= \sum_{u \in \F_p^*} \chi\(1+u\) = \sum_{u \in \F_p} \chi\(1+u\)  - \chi(1)
= - \chi(1).
\end{split}
\end{equation*}
 So the total contribution from 
such terms is $O(M^2) = O(Mp)$
and the result follows.
\end{proof}

We also need the following bound of Bourgain~\cite[Theorem~1]{Bour1}.

\begin{lem}
\label{lem:SparseExpSum} 
For every fixed real $\varepsilon>0$ and
integer $r\ge 1$ there is some  $\xi>  0$ such that 
for any integers $k_1, \ldots, k_r \ge 1$ with 
$$\gcd(k_i,  p-1) < p^{1-\varepsilon}, \mand
\gcd(k_i-k_j, p-1) < p^{1-\varepsilon}, 
$$ 
for $i, j =1, \ldots, r$,  $i \ne j$, uniformly over the coefficients $a_1, \ldots, a_r\in \F_p$, not all equal to zero, we have
$$
\sum_{x=1}^{p-1} \exp\(\frac{2 \pi i}{p} \(a_1x^{k_1} + \ldots + a_r x^{k_r}\)\) 
\ll p^{1-\xi}.
$$
\end{lem}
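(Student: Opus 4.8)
The plan is to reduce this sparse sum to exponential sums over large multiplicative subgroups of $\F_p^*$, where the sum--product phenomenon supplies the required power saving; the lemma is essentially a packaging of the Bourgain--Glibichuk--Konyagin circle of ideas \cite{BGK}, so I would import that machinery as a black box and concentrate on the reduction. Writing $\ep(z)=\exp(2\pi i z/p)$ and $S=\sum_{x\in\F_p^*}\ep\(a_1x^{k_1}+\ldots+a_rx^{k_r}\)$, the first observation is that for each $i$ the monomial $x\mapsto x^{k_i}$ sweeps out (with constant multiplicity) the subgroup of order $(p-1)/\gcd(k_i,p-1)$, which by hypothesis exceeds $p^{\varepsilon}$. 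This is exactly the reason the conditions $\gcd(k_i,p-1)<p^{1-\varepsilon}$ are imposed: they prevent any single monomial from collapsing onto a short subgroup, where $\ep(a_ix^{k_i})$ could fail to cancel.

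The key manoeuvre is that $S$ is (almost) invariant under multiplicative dilation. Fixing a subgroup $\cH\subseteq\F_p^*$ with $p^{\varepsilon}\le \#\cH\le p^{2\varepsilon}$ and replacing $x$ by $xy$ for $y\in\cH$, the invariance of the range under $x\mapsto xy$ together with $(xy)^{k_i}=x^{k_i}y^{k_i}$ gives $S=\tfrac{1}{\#\cH}\sum_{y\in\cH}\sum_{x}\ep\(\sum_i a_iy^{k_i}x^{k_i}\)$. Applying Cauchy--Schwarz in $x$ and completing the $x$-sum, I obtain
\[
|S|^2\le \frac{p}{(\#\cH)^2}\sum_{y_1,y_2\in\cH}\ \sum_{x\in\F_p}\ep\Bigl(\textstyle\sum_i a_i\bigl(y_1^{k_i}-y_2^{k_i}\bigr)x^{k_i}\Bigr).
\]
The diagonal terms $y_1=y_2$ contribute $O\(p^2/\#\cH\)=O\(p^{2-\varepsilon}\)$, which is already of the desired shape since $\#\cH>p^{\varepsilon}$. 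The whole game is therefore to show that the off-diagonal terms, in which the coefficient vector $\bigl(a_i(y_1^{k_i}-y_2^{k_i})\bigr)_i$ is nonzero, carry genuine cancellation.

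For the off-diagonal contribution I would invoke the subgroup (i.e.\ $r=1$) case of the Bourgain--Glibichuk--Konyagin estimate, which asserts $\bigl|\sum_{\lambda\in\cK}\ep(c\lambda)\bigr|\le \#\cK\, p^{-\delta_0}$ for every subgroup $\cK$ of order $>p^{\varepsilon}$ and every $c\in\F_p^*$, with $\delta_0=\delta_0(\varepsilon)>0$; this is the point where sum--product enters. The difference conditions $\gcd(k_i-k_j,p-1)<p^{1-\varepsilon}$ are precisely what keep the $r$ monomials in ``general position'': they guarantee that for a positive proportion of pairs $(y_1,y_2)$ the surviving coefficients do not conspire to linearize the phase onto a single short subgroup, so that each such inner sum admits the power saving. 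Bounding the number of degenerate pairs $(y_1,y_2)$ by a multiplicative-energy count for $\cH$ (again controlled by sum--product) and feeding everything back produces $|S|\ll p^{1-\xi}$ with $\xi=\xi(\varepsilon,r)>0$. Since neither the sizes of the $a_i$ nor their precise values were used beyond normalizing one nonzero coefficient onto a distinguished monomial, the bound is uniform over all nonzero $(a_1,\ldots,a_r)$.

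The main obstacle is the multi-monomial bookkeeping in the off-diagonal analysis: one must verify that the $r$ distinct exponents genuinely interact, so that the energy count inherits the single-subgroup saving rather than allowing the phases to cancel among themselves. This is where the full strength of the difference conditions $\gcd(k_i-k_j,p-1)<p^{1-\varepsilon}$ is needed, and where the argument is delicate, since iterating the Cauchy--Schwarz differencing degrades the exponent at each stage and one must check that $\xi$ can be kept strictly positive (even if rapidly shrinking) for every fixed $r$. Establishing the underlying sum--product input with constants uniform in $p$ is the substantive part; granting it, the reduction sketched above is routine.
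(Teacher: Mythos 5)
The paper offers no proof of this statement to compare against: it is quoted verbatim as Bourgain's ``Mordell's exponential sum estimate revisited'' (Theorem~1 of~\cite{Bour1}) and used as a black box. Your sketch must therefore stand on its own as a proof, and it does not: its central step is circular.

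After the dilation average and Cauchy--Schwarz, your off-diagonal inner sums are
$$
\sum_{x\in\F_p}\exp\left(\frac{2\pi i}{p}\left(c_1x^{k_1}+\ldots+c_rx^{k_r}\right)\right),
\qquad c_i=a_i\left(y_1^{k_i}-y_2^{k_i}\right),
$$
which are sums of exactly the same shape as the sum $S$ you set out to bound: the same $r$ exponents $k_1,\ldots,k_r$ with some nonzero coefficient vector. The Bourgain--Glibichuk--Konyagin subgroup bound~\cite{BGK} that you invoke is (equivalent to) the case $r=1$ of the present lemma, since a single monomial $x\mapsto x^{k}$ sweeps out a subgroup with constant multiplicity; it says nothing about an $r$-term phase, and no choice of $\cH$ makes the off-diagonal phases single-monomial. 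Multiplicative differencing never decreases the number of monomials (unlike additive Weyl differencing, which would instead destroy sparsity), so the step cannot be repaired by induction on $r$ either. In effect your argument establishes $|S|^2\ll p^{2-\varepsilon}+p\cdot\max|S'|$, where $S'$ ranges over sums of the original kind --- the estimate assumes itself, and what you defer as ``multi-monomial bookkeeping'' is the entire content of the theorem. Bourgain's actual proof proceeds quite differently, by raising $S$ to a high power and bounding the number of solutions of the system $\sum_{j}(x_j^{k_i}-y_j^{k_i})\equiv 0 \pmod p$, $i=1,\ldots,r$, via sum-product and energy estimates. Two further defects, more minor but real: a subgroup $\cH\subseteq\F_p^*$ with $p^{\varepsilon}\le\#\cH\le p^{2\varepsilon}$ need not exist, since its order must divide $p-1$ (consider $p-1=2q$ with $q$ prime); and even when it exists, your hypotheses control $\gcd(k_i,p-1)$, not $\gcd(k_i,\#\cH)$, so the map $z\mapsto z^{k_i}$ can be trivial on all of $\cH$ (e.g.\ when $\#\cH\mid k_i$), in which case every pair $(y_1,y_2)$ is degenerate and your inequality collapses to the trivial bound $|S|^2\le p^2$.
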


Clearly for any $F \in \F_p[X]$ and a multiplicative subgroup   $\cG \subseteq \F_p^*$
of order $\#\cG = T$ we have 
$$
\frac{1}{\#\cG} \sum_{\lambda \in \cG} 
\exp\(\frac{2 \pi i}{p} F(\lambda)\) =\frac{1}{p-1} 
\sum_{x=1}^{p-1} \exp\(\frac{2 \pi i}{p} F(x^{(p-1)/T})\)
\ll p^{-\xi}.
$$
so we derive from Lemma~\ref{lem:SparseExpSum}:

\begin{cor}
\label{cor:SubgrExpSum} 
For every fixed real $\varepsilon>0$ and
integer $d\ge 1$ there is some  $\xi > 0$ such that 
for  $T \ge p^\varepsilon$, uniformly over $a \in \F_p^*$, we have
$$
 \sum_{\lambda \in \cG} 
\exp\(\frac{2 \pi i}{p} a f(\lambda)\) \ll T p^{-\xi}.
$$
\end{cor}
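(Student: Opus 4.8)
The plan is to reduce the subgroup sum to a complete sparse exponential sum over $\F_p^*$ and then invoke Bourgain's bound, Lemma~\ref{lem:SparseExpSum}. The reduction is exactly the identity already displayed just before the statement: since $\F_p^*$ is cyclic of order $p-1$ and $\cG$ is its unique subgroup of order $T$, the map $x \mapsto x^{(p-1)/T}$ covers $\cG$ uniformly, $(p-1)/T$ times to one, so that
$$
\sum_{\lambda \in \cG} \exp\(\frac{2\pi i}{p} a f(\lambda)\) = \frac{T}{p-1}\sum_{x=1}^{p-1} \exp\(\frac{2\pi i}{p} a f\(x^{(p-1)/T}\)\).
$$
Thus it suffices to show that the complete sum on the right is $\ll p^{1-\xi}$ for some $\xi>0$.

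First I would write $f(X)=\sum_{i=0}^d c_i X^i$ with $c_d \ne 0$, so that $af\(x^{(p-1)/T}\)$ becomes, after discarding the constant term $ac_0$ (which contributes only a unimodular factor and so does not affect the modulus of the sum), a sparse polynomial $\sum_{i=1}^d a c_i x^{k_i}$ in the variable $x$, with exponents $k_i = i(p-1)/T$ and at most $d$ terms. The leading coefficient $ac_d$ is nonzero because $a\in\F_p^*$ and $c_d\ne 0$, so the coefficients are not all zero, which is the nonvanishing hypothesis of Lemma~\ref{lem:SparseExpSum}.

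The only place requiring any care, and the main obstacle, is the verification of the two gcd hypotheses of Lemma~\ref{lem:SparseExpSum}. Writing $m=(p-1)/T$, so that $k_i=im$ and $mT=p-1$, I would compute $\gcd(k_i,p-1)=m\gcd(i,T)$ and $\gcd(k_i-k_j,p-1)=m\gcd(i-j,T)$ for $i\ne j$; since $1\le i\le d$ and $1\le |i-j|\le d$, both quantities are at most $md = d(p-1)/T \le d\,p^{1-\varepsilon}$, using $T\ge p^\varepsilon$. The fixed factor $d$ is harmless and is absorbed by applying the lemma with $\varepsilon$ replaced by, say, $\varepsilon/2$: for all sufficiently large $p$ one has $d\,p^{1-\varepsilon} < p^{1-\varepsilon/2}$, so the hypotheses hold with $r\le d$ and exponent $\varepsilon/2$. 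Lemma~\ref{lem:SparseExpSum} then produces some $\xi>0$ depending only on $\varepsilon$ and $d$ with the complete sum $\ll p^{1-\xi}$, and substituting back through the identity above gives $\sum_{\lambda\in\cG}\exp(2\pi i a f(\lambda)/p)\ll Tp^{-\xi}$, as claimed.
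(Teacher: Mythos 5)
Your proof is correct and follows exactly the paper's route: the paper likewise reduces the subgroup sum to a complete sum over $\F_p^*$ via the identity with $x\mapsto x^{(p-1)/T}$ and then invokes Lemma~\ref{lem:SparseExpSum}. In fact you carry out the gcd verification (with $\gcd(k_i,p-1)\le d(p-1)/T$ and the $\varepsilon/2$ trick) that the paper leaves implicit, so your write-up is, if anything, more complete.
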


\subsection{Bound on the number of solutions to some
congruences}

First we note that combining 
Corollary~\ref{cor:SubgrExpSum} with the  {\it Erd\H{o}s-Tur\'{a}n inequality}
(see, for example,~\cite[Theorem~1.21]{DrTi}) that relates the uniformity of distribution to exponential sums, we immediately obtain:

\begin{lem}
\label{lem:G in I}  For every fixed real $\varepsilon>0$ and
integer $r\ge 1$ there is some  $\kappa > 0$ such that  
for   $T \ge p^\varepsilon$, we have
$$
\# \{\lambda \in \cG~:~ f(\lambda) \equiv b+x \pmod p, \text{ where } x \in \cI\}
= \frac{H T}{p} + O\(T^{1-\kappa}\),
$$
uniformly over $b \in \F_p$.
\end{lem}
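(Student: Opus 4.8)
The plan is to read the lemma as a discrepancy statement for the finite sequence of points $f(\lambda)/p \in [0,1)$ indexed by $\lambda \in \cG$, and then to combine the Erd\H{o}s--Tur\'an inequality with the power saving already available in Corollary~\ref{cor:SubgrExpSum}. Indeed, the condition $f(\lambda)\equiv b+x\pmod p$ with $x\in\cI=\{1,\ldots,H\}$ says exactly that the reduction of $f(\lambda)$ modulo $p$ lies in the residue interval $\{b+1,\ldots,b+H\}$, equivalently that $f(\lambda)/p$ lies in an arc of length $H/p$ in $[0,1)$. Hence the cardinality to be estimated is $T$ times the proportion of the points $f(\lambda)/p$ landing in that arc, and the claimed main term $HT/p$ is precisely $T$ times the arc length. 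It therefore suffices to bound the discrepancy of this sequence.

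First I would invoke the Erd\H{o}s--Tur\'an inequality (\cite[Theorem~1.21]{DrTi}) in the form: for every integer $K\ge 1$ the discrepancy $D$ of the $T$ points $f(\lambda)/p$ satisfies
$$
D \ll \frac{1}{K} + \sum_{h=1}^{K} \frac{1}{h}\left|\frac{1}{T}\sum_{\lambda\in\cG}\exp\left(\frac{2\pi i}{p}\,h f(\lambda)\right)\right|.
$$
For each $h$ with $1\le h\le K<p$ we have $h\in\F_p^*$, so Corollary~\ref{cor:SubgrExpSum} applies with the coefficient $a=h$ and yields $\left|\sum_{\lambda\in\cG}\exp(2\pi i h f(\lambda)/p)\right|\ll T p^{-\xi}$ for some $\xi>0$ depending only on $\varepsilon$ and $d$. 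The key feature I would emphasize is that this bound is \emph{uniform} in $a$, hence uniform across all frequencies $h=1,\ldots,K$. Substituting gives $D \ll 1/K + p^{-\xi}\sum_{h\le K} 1/h \ll 1/K + p^{-\xi}\log K$.

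Next I would make the choice $K=\lfloor p^{\xi/2}\rfloor$, for which $1/K\ll p^{-\xi/2}$ dominates the term $p^{-\xi}\log K$, so that $D\ll p^{-\xi/2}$ with no stray logarithmic loss. By the definition of discrepancy (splitting the arc into at most two ordinary subintervals of $[0,1)$ should it wrap around $0$, which affects nothing) the counting function equals $HT/p + O(TD) = HT/p + O(Tp^{-\xi/2})$, uniformly in $b$. Finally, since $\cG\subseteq\F_p^*$ forces $T<p$, we have $p^{-\xi/2}\le T^{-\xi/2}$, whence the error is $O(T^{1-\xi/2})$, and the lemma follows with $\kappa=\xi/2$.

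As for the main obstacle, the genuinely substantive input---the power saving in the exponential sums over $\cG$---is entirely delivered by Corollary~\ref{cor:SubgrExpSum}, which itself rests on Bourgain's Lemma~\ref{lem:SparseExpSum}; granting that, the present argument is routine. The only points that require a little care are bookkeeping ones: transferring the $p^{-\xi}$ saving into a power saving in $T$ (which is exactly where $T<p$ is used), handling the possible wrap-around of the residue interval modulo $p$, and verifying that the uniformity over $a\in\F_p^*$ in the Corollary translates into uniformity both in $b$ and in the Erd\H{o}s--Tur\'an frequencies $h$.
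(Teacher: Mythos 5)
Your proof is correct and takes essentially the same approach as the paper: the paper obtains this lemma in a single sentence by combining Corollary~\ref{cor:SubgrExpSum} with the Erd\H{o}s--Tur\'an inequality, and your argument just supplies the routine details of that combination (the discrepancy of the points $f(\lambda)/p$, the choice of the cutoff $K$, and the conversion of the $p^{-\xi}$ saving into a $T^{-\kappa}$ saving via $T<p$). Nothing further is needed.
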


Let  $N(\cI,\cG)$ be the number of solutions to the congruence
$$
\lambda x\equiv y \pmod p, \qquad  x,y \in \cI, \  \lambda\in \cG.
$$

Some of our results rely on an upper bound on $N(\cI,\cG)$ which is given 
in~\cite[Theorem~1]{BKS1}, see also~\cite{BKS2} for some other bounds. 

\begin{lem}
\label{lem:NIG} Let $\nu\ge 1$ be a fixed
integer.  Then 
$$
N(\cI,\cG) \le H t^{(2\nu +1)/2\nu(\nu+1)}p^{-1/2(\nu +1) + o(1)}
+ H^2 t^{1/\nu}p^{-1/\nu + o(1)}, 
$$
as $p\to \infty$, where 
$$
t = \max\{T, p^{1/2}\}.
$$
\end{lem}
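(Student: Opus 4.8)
The plan is to pass to multiplicative characters. Since $x\in\cI$ forces $x\not\equiv 0$, the congruence $\lambda x\equiv y$ has the unique solution $\lambda=yx^{-1}$, so $N(\cI,\cG)=\#\{(x,y)\in\cI^2:yx^{-1}\in\cG\}$. Writing the indicator of $\cG$ through the group $\cG^\perp$ of the $(p-1)/T$ characters trivial on $\cG$, one gets
$$
N(\cI,\cG)=\frac{T}{p-1}\sum_{\psi\in\cG^\perp}\Big|\sum_{x=1}^H\psi(x)\Big|^2 .
$$
Set $S_\psi=\sum_{x=1}^H\psi(x)$. The principal character contributes the main term $\frac{T}{p-1}H^2$; a direct check shows $\frac{TH^2}{p}\le H^2t^{1/\nu}p^{-1/\nu}$ for every $\nu\ge1$ in both ranges $T\ge\sqrt p$ and $T<\sqrt p$, so this is swallowed by the second term of the asserted bound. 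Everything therefore reduces to estimating $\Sigma=\sum_{\psi\ne\psi_0,\,\psi\in\cG^\perp}|S_\psi|^2$.

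First I would feed $\Sigma$ through H\"older's inequality with exponent $\nu$, bounding $\Sigma\le(\sum_{\psi\ne\psi_0}|S_\psi|^{2\nu})^{1/\nu}(\#\cG^\perp)^{1-1/\nu}$ and using $\#\cG^\perp\le p/T$; this is where the saving from the sparseness of $\cG^\perp$ is extracted, and it is the source of the $\nu$-dependence of the final exponents. By orthogonality the $2\nu$-th moment $\sum_{\psi\in\cG^\perp}|S_\psi|^{2\nu}$ equals $\frac{p-1}{T}$ times the number of solutions of the multiplicative congruence $x_1\cdots x_\nu\equiv\gamma\,y_1\cdots y_\nu\pmod p$ with $x_i,y_j\in\cI$ and $\gamma\in\cG$. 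Equivalently, via Gauss-sum completion (for nonprincipal $\psi$ one has $|S_\psi|=p^{-1/2}|\sum_{t=1}^{p-1}\overline\psi(t)K(t)|$, where $K(t)=\sum_{x=1}^H\ep(tx)$) one is led to the correlations $\sum_{\gamma\in\cG}\sum_s K(\gamma s)\overline{K(s)}$ of the interval's Fourier transform along the subgroup; I would use whichever of the two equivalent shapes is more convenient.

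The technical heart is then to estimate this congruence count, equivalently the subgroup correlation. Its diagonal/equidistribution main term is of order $H^{2\nu}T/p$, and on reinsertion it reproduces exactly the second term $H^2t^{1/\nu}p^{-1/\nu+o(1)}$, the $p^{o(1)}$ being the usual divisor-function loss coming from genuine coincidences $x_1\cdots x_\nu=y_1\cdots y_\nu$. The off-diagonal part must be controlled by complete, Weil-type character and exponential sum bounds, and it is this contribution that yields the first term $Ht^{(2\nu+1)/(2\nu(\nu+1))}p^{-1/(2(\nu+1))}$.

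I expect this last step to be the main obstacle, for two reasons. First, the equidistribution error genuinely requires square-root cancellation from the Weil bound, and the Weil bound stops improving once the relevant complete sums have length $\sqrt p$; this is precisely why the final estimate is governed by $t=\max\{T,\sqrt p\}$ rather than by $T$ alone, the subgroup behaving, for $T<\sqrt p$, like a generic set of size $\sqrt p$. Second, combining the Weil input with the H\"older averaging so that the two error contributions assemble into the clean two-term bound (with the stated $\nu$- and $(\nu+1)$-exponents) requires care in the order of summation and in the choice of moment; this bookkeeping, rather than any single inequality, is the crux, and it is exactly what is carried out in \cite[Theorem~1]{BKS1}.
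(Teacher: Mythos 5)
Your opening reductions are all correct: the identity $N(\cI,\cG)=\frac{T}{p-1}\sum_{\psi\in\cG^\perp}|S_\psi|^2$, the check that the principal character term is absorbed by the second term of the bound, the H\"older step, and the identity $\sum_{\psi\in\cG^\perp}|S_\psi|^{2\nu}=\frac{p-1}{T}M_\nu$, where $M_\nu$ counts tuples $(x_i,y_j)\in\cI^{2\nu}$ with $x_1\cdots x_\nu\equiv\gamma y_1\cdots y_\nu \pmod p$ for some $\gamma\in\cG$. But observe that this is only a reformulation, not progress: $M_1=N(\cI,\cG)$, so the entire content of the lemma now resides in the unproved bound on $M_\nu$, and the tool you propose for it --- ``complete, Weil-type character and exponential sum bounds'' --- cannot supply it in the range where the lemma is actually applied. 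The paper uses Lemma~\ref{lem:NIG} with $H$ as small as $p^{\varepsilon}$ (in the proof of Theorem~\ref{thm:Bound IG1}, $\nu$ is chosen large in terms of $\varepsilon$). For $H\ll p^{1/2}$ the off-diagonal part of $M_\nu$, written additively as $\frac{1}{p}\sum_{a\neq 0}\sum_{\gamma\in\cG}\sum_{x_i,y_j\in\cI}\ep(a(x_1\cdots x_\nu-\gamma y_1\cdots y_\nu))$, involves multilinear exponential sums over boxes of side $H$, to which Weil-type (complete-sum) bounds do not apply; completing each variable loses a factor of order $p/H$, and Burgess-type estimates are vacuous for $H\le p^{1/4}$. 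If square-root cancellation of Weil type were available in this regime, the paper's main theorems would themselves follow by classical arguments --- the whole point of the lemma is that this range is inaccessible to such methods. A smaller slip: the diagonal solutions $x_1\cdots x_\nu=y_1\cdots y_\nu$ contribute about $H^{\nu}p^{o(1)}$ to $M_\nu$, hence $Hp^{o(1)}$ to $N(\cI,\cG)$; this is dominated by the \emph{first} term of the lemma (which is at least $Hp^{1/4\nu(\nu+1)}$), not folded into the second term as a $p^{o(1)}$ loss as you assert.

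Your final sentence also misdescribes the situation. The paper gives no proof of this lemma at all: it quotes it directly from \cite[Theorem~1]{BKS1}. An argument whose crucial step is ``this is exactly what is carried out in \cite[Theorem~1]{BKS1}'' is therefore not a proof of the lemma but a citation of it; and, moreover, the method of that reference is not the one you sketch. It rests on product sets of rationals $y/x$ with numerators and denominators below $\sqrt p$ (which map injectively to residues modulo $p$), combined with Stepanov-method bounds for points of multiplicative subgroups and their translates in short intervals; that machinery, rather than Weil bounds plus bookkeeping, is the source of the quantity $t=\max\{T,p^{1/2}\}$ and of the exponents involving both $\nu$ and $\nu+1$.
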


We also use the following bound which 
is due to Ayyad, Cochrane and Zheng~\cite[Theorem~1]{ACZ}.

\begin{lem}
   \label{lem:SymCong} 
Let $\cJ_i = \{b_i+1, \ldots, b_i + h_i\}$
for some integers $p> h_i + b_i > b_i \ge 1$, $i =1, 2, 3, 4$. Then 
\begin{equation*}
\begin{split}
\# \{(x_1,x_2,x_3,x_4) \in \cJ_1&\times \cJ_2 \times \cJ_3 \times \cJ_4~:~
x_1 x_2 \equiv x_3 x_4\pmod p\} \\
& \quad = \frac{1}{p} h_1 h_2 h_3 h_4 + O\(\( h_1 h_2 h_3 h_4\)^{1/2}
(\log p)^2\).
\end{split}
\end{equation*}
\end{lem}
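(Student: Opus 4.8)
The plan is to detect the congruence by additive characters and reduce the whole statement to a symmetric (``balanced'') second moment, which is then handled by the geometric-sum bound for incomplete linear sums. Let $N$ denote the quantity to be estimated, write $\ep(z)=\exp(2\pi i z/p)$, and set
$$
W_t(\cA,\cB)=\sum_{x\in\cA}\sum_{y\in\cB}\ep(txy).
$$
Orthogonality of additive characters gives $N=\frac1p\sum_{t=0}^{p-1}W_t(\cJ_1,\cJ_2)\overline{W_t(\cJ_3,\cJ_4)}$. The term $t=0$ contributes exactly $p^{-1}h_1h_2h_3h_4$, which is the asserted main term, so it remains to show that the sum over $t\ne0$ is $O\big((h_1h_2h_3h_4)^{1/2}(\log p)^2\big)$.

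First I would apply Cauchy--Schwarz in $t$ to decouple the two pairs of intervals,
$$
\Big|\sum_{t=1}^{p-1}W_t(\cJ_1,\cJ_2)\overline{W_t(\cJ_3,\cJ_4)}\Big|\le\Big(\sum_{t=1}^{p-1}|W_t(\cJ_1,\cJ_2)|^2\Big)^{1/2}\Big(\sum_{t=1}^{p-1}|W_t(\cJ_3,\cJ_4)|^2\Big)^{1/2}.
$$
For a single pair of intervals $\cA,\cB$ of lengths $h_\cA,h_\cB$, opening the square and summing over all $t$ gives $\sum_{t=1}^{p-1}|W_t(\cA,\cB)|^2=p\,N_0(\cA,\cB)-(h_\cA h_\cB)^2$, where $N_0(\cA,\cB)$ counts the solutions of $x_1x_2\equiv x_1'x_2'\pmod p$ with $x_1,x_1'\in\cA$ and $x_2,x_2'\in\cB$. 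Consequently the theorem follows once I establish the symmetric estimate
$$
N_0(\cA,\cB)=\frac{(h_\cA h_\cB)^2}{p}+O\big(h_\cA h_\cB(\log p)^2\big),
$$
since then each factor above is $O\big(p\,h_\cA h_\cB(\log p)^2\big)$ and the prefactor $1/p$ collapses the product to the required $(h_1h_2h_3h_4)^{1/2}(\log p)^2$.

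To prove the balanced estimate I would expand $N_0(\cA,\cB)$ once more by characters, writing $W_t(\cA,\cB)=\sum_{x\in\cA}g_t(x)$ with $g_t(x)=\sum_{y\in\cB}\ep(txy)$, and use the geometric-sum bound $|g_t(x)|\le\min\big(h_\cB,\tfrac12\|tx/p\|^{-1}\big)$, where $\|\cdot\|$ denotes distance to the nearest integer. The diagonal $x_1=x_1'$ is harmless: since $tx$ runs over all nonzero residues as $t$ does, it contributes $h_\cA\sum_{u=1}^{p-1}\big|\sum_{y\in\cB}\ep(uy)\big|^2=h_\cA(p\,h_\cB-h_\cB^2)\le p\,h_\cA h_\cB$. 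The off-diagonal terms $x_1\ne x_1'$, after the substitution $u=tx_1$ and with $w=x_1'/x_1\bmod p$, reduce to sums of the form $\sum_{u=1}^{p-1}\min\big(h_\cB,\tfrac12\|u/p\|^{-1}\big)\min\big(h_\cB,\tfrac12\|uw/p\|^{-1}\big)$, and it is here that the two logarithmic factors are generated.

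The main obstacle is exactly this off-diagonal bound: the two spikes $\|u/p\|^{-1}$ and $\|uw/p\|^{-1}$ have to be controlled simultaneously and uniformly in the ratio $w$, and then summed over the $O(h_\cA^2)$ admissible ratios with a total loss of no more than $(\log p)^2$. I would handle it by the classical estimate for products of two such peak functions (whose proof rests on the continued-fraction/divisor structure of $w$); an equivalent and perhaps more transparent route is to pass from the congruence to the integer identity $x_1x_2-x_1'x_2'=\ell p$, where the single term $\ell=0$ gives the genuine equation $x_1x_2=x_1'x_2'$, counted by a divisor bound of size $O\big(h_\cA h_\cB\log p\big)$, while the nonzero $\ell$ reassemble the main term $(h_\cA h_\cB)^2/p$ with an admissible remainder. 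Either way one obtains the symmetric bound for $N_0(\cA,\cB)$, and hence the theorem; controlling the precise power of $\log p$ is the only delicate bookkeeping.
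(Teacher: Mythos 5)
The paper does not actually prove this lemma: it is imported verbatim from Ayyad, Cochrane and Zheng~\cite{ACZ} (their Theorem~1), so your attempt must be measured against the published proof rather than anything internal to this paper. Your opening reduction is correct and, as it happens, coincides with the first step of that published proof: orthogonality of additive characters, extraction of the $t=0$ main term $h_1h_2h_3h_4/p$, Cauchy--Schwarz in $t$, and the identity $\sum_{t=0}^{p-1}|W_t(\cA,\cB)|^2=p\,N_0(\cA,\cB)$, which together reduce everything to the symmetric estimate $N_0(\cA,\cB)=(h_\cA h_\cB)^2/p+O\bigl(h_\cA h_\cB(\log p)^2\bigr)$; this symmetric statement is precisely Theorem~2 of~\cite{ACZ}, and your bookkeeping showing that it implies the asserted error term is right.

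The genuine gap is that this symmetric estimate is the entire content of the lemma, and neither of your sketched routes establishes it. In route (a), the phrase ``controlled simultaneously and uniformly in the ratio $w$'' is exactly what cannot work: by Cauchy--Schwarz the best bound uniform in $w$ for $\sum_{u=1}^{p-1}\min\bigl(h_\cB,\|u/p\|^{-1}\bigr)\min\bigl(h_\cB,\|uw/p\|^{-1}\bigr)$ is of order $h_\cB p$, and it is essentially attained when $w/p$ has a huge partial quotient; multiplied by the $\asymp h_\cA^2$ pairs and divided by $p$ this yields $h_\cA^2h_\cB$, and even inserting the ``typical'' value $p(\log p)^2$ for every pair gives $h_\cA^2(\log p)^2$, which already exceeds the target $h_\cA h_\cB(\log p)^2$ whenever $h_\cA>h_\cB$. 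One is therefore forced to exploit the multiset structure of the ratios $x_1'/x_1$ --- grouping them by lowest-terms representatives $b/a$, using the multiplicity $O(h_\cA/\max(a,b))$ and a peak-product bound decaying in $\max(a,b)$ --- and that averaging argument is the substance of the proof in~\cite{ACZ}, not an off-the-shelf classical estimate. In route (b), the claim that the $\ell=0$ equation $x_1x_2=x_3x_4$ is ``counted by a divisor bound of size $O(h_\cA h_\cB\log p)$'' is unjustified: the pointwise bound $\tau(n)=n^{o(1)}$ gives only $h_\cA h_\cB p^{o(1)}$, and mean-value divisor estimates in short intervals (Shiu) require the interval length to exceed a fixed power of its position, which the lemma's full range (arbitrary $h_i\ge 1$, boxes located anywhere in $[1,p)$) does not guarantee; the correct treatment parametrizes solutions by greatest common divisors. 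Finally, the assertion that the nonzero $\ell$ ``reassemble the main term with an admissible remainder'' is a restatement of the theorem, not an argument. In short: a correct and standard reduction, with a citation-shaped hole exactly where the paper itself cites~\cite{ACZ}.
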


We now fix some real $L > 1$ and
denote by $\cL$  the set of primes of the interval $[L, 2L]$.
We need an upper bound on the quantity
\begin{equation}
\label{eq:def W}
\begin{split}
W  =\# \Bigl\{(u_1,u_2,\ell_1,\ell_2,s_1,s_2)&\in \cI^2\times \cL^2 \times \cS^2~:\\
&~
\frac{u_1+s_1}{\ell_1} \equiv \frac{u_2+s_2}{\ell_2} \pmod p\Bigr\}
\end{split}
\end{equation}
for some special class of sets. 

We say that a set $\cS \subseteq \F_p$ is $h$-spaced
if no elements $s_1, s_2\in \cS$ and positive integer $k\le h$
satisfy the equality $s_1+k=s_2$.

The following result is given in~\cite{BKS3} 
and is based on some ideas of Shao~\cite{Shao}.

\begin{lem}
\label{lem:W}  If $L < H$ and $2HL < p$ then for any 
$H$-spaced set $\cS$ for $W$, given by~\eqref{eq:def W}
we have
$$
W\ll \frac{(\# \cS H L)^2}{p}  +  \# \cS H L p^{o(1)}.
$$
\end{lem}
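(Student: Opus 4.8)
The plan is to clear denominators in the defining congruence of $W$, reduce to a near-diagonal counting problem for the products $\ell_i v_j$ with $v_j\in\cI+\cS$, and feed this into the Ayyad--Cochrane--Zheng bound of Lemma~\ref{lem:SymCong}. First I would rewrite the condition of~\eqref{eq:def W} as
$$
\ell_2 u_1-\ell_1 u_2\equiv \ell_1 s_2-\ell_2 s_1\pmod p .
$$
The purpose of this arrangement is that, since $1\le u_i\le H$ and $L\le\ell_i\le 2L$, the left-hand side is an integer in the interval $(-2HL,2HL)$; the hypothesis $2HL<p$ then guarantees that its residue class modulo $p$ pins it down to at most two integer values. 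Consequently a solution can occur only when $\ell_1 s_2-\ell_2 s_1$ lies within $2HL$ of a multiple of $p$, and once this holds the coprimality of two distinct primes $\ell_1\ne\ell_2$ confines the pairs $(u_1,u_2)\in\cI^2$ solving the displayed congruence to a single arithmetic progression of step $(\ell_1,\ell_2)$, hence to $O(H/L)$ of them.

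I would then split off the diagonal $\ell_1=\ell_2$. Here the congruence collapses to $u_1+s_1\equiv u_2+s_2\pmod p$, and since $\cS$ is $H$-spaced the representation of an element of $\cI+\cS$ as $u+s$ is unique; modulo the few wrap-around terms that $2HL<p$ renders harmless, this forces $s_1=s_2$ and $u_1=u_2$, so the diagonal contributes only $O(\#\cS\,HL)$, which is absorbed into the second term of the claimed bound.

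For the off-diagonal part the natural move is to fix the two shifts $(s_1,s_2)$ and regard $v_i=u_i+s_i$ as running over an interval of length $H$. Then the original congruence $\ell_2 v_1\equiv\ell_1 v_2\pmod p$ has exactly the shape $x_1x_2\equiv x_3x_4\pmod p$ with $x_1,x_3$ ranging over $[L,2L]$ (an upper bound for the primes in $\cL$) and $x_2,x_4$ over intervals of length $H$. Lemma~\ref{lem:SymCong} therefore gives $H^2L^2/p+O\!\big(HL(\log p)^2\big)$ for each pair $(s_1,s_2)$, and summing the main terms over $(s_1,s_2)\in\cS^2$ produces precisely $(\#\cS\,HL)^2/p$, the first term of the lemma.

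The \emph{main obstacle} is the error term. Summing the Ayyad--Cochrane--Zheng error $O\!\big(HL(\log p)^2\big)$ over all $\#\cS^2$ pairs yields $\#\cS^2\,HL\,p^{o(1)}$, which exceeds the claimed $\#\cS\,HL\,p^{o(1)}$ by a whole factor of $\#\cS$; the block-by-block estimate is thus wasteful, and removing this spurious factor is the heart of the matter. This is exactly where the full strength of the hypotheses must be brought in, following the idea of Shao~\cite{Shao}: the $H$-spacing makes the translated blocks $s+\cI$ pairwise disjoint, and the primality of the elements of $\cL$, together with $L^2<HL<p/2$, forces the fractions $\ell_1/\ell_2\pmod p$ to be essentially distinct, so that distinct prime pairs generate well-separated moduli. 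I would therefore not estimate block-by-block but instead bound the off-diagonal count globally, controlling the number of $(\ell_1,\ell_2,s_1,s_2)$ for which $\ell_1 s_2-\ell_2 s_1$ is within $2HL$ of $0$ modulo $p$: the admissible $s_2$ for each $(\ell_1,\ell_2,s_1)$ fill a short progression, and the disjointness of the blocks caps how often these land back in $\cI+\cS$, which is what converts the wasteful $\#\cS^2$ into $\#\cS$. Making this global count precise, while verifying that the remaining equidistributed part is genuinely of size $\#\cS^2HL^3/p$ so that the factor $H/L$ recovers the main term, is the step I expect to demand the most care.
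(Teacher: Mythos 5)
Your preparatory reductions are all sound: clearing denominators to $\ell_2u_1-\ell_1u_2\equiv\ell_1s_2-\ell_2s_1\pmod p$, noting that the left side is an integer in $(-2HL,2HL)$ so that $2HL<p$ pins it to at most two integer values, the $O(H/L)$ count of pairs $(u_1,u_2)$ per value via coprimality of the distinct primes $\ell_1\ne\ell_2$, and the disposal of the diagonal $\ell_1=\ell_2$ by $H$-spacing are correct, and they correctly reduce the lemma to showing that
$$
M=\#\left\{(\ell_1,\ell_2,s_1,s_2)\in\cL^2\times\cS^2:\ \ell_1s_2-\ell_2s_1\equiv c\!\!\pmod p\ \text{for some integer } |c|<2HL\right\}
\ll \frac{(\#\cS)^2HL^3}{p}+\#\cS L^2p^{o(1)}.
$$
You also correctly diagnose that the block-by-block use of Lemma~\ref{lem:SymCong} proves only the weaker error term $(\#\cS)^2HL\,p^{o(1)}$, which loses a factor $\#\cS$ (up to $T$) and would destroy the power saving in Lemma~\ref{lem:Uv}, the lemma's only application here. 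Note that the paper itself offers no proof --- the statement is quoted from~\cite{BKS3} and attributed to ideas of Shao~\cite{Shao} --- so your argument has to stand entirely on its own.

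It does not, and the gap is precisely the step you flag as "the heart of the matter": the bound on $M$ is never proved, and the mechanism you propose for it fails as stated. For fixed $(\ell_1,\ell_2,s_1)$ the admissible $s_2$ satisfy $s_2\equiv\ell_1^{-1}\ell_2s_1+\ell_1^{-1}c\pmod p$ with $|c|<2HL$; this is not a short progression in any metric sense but the dilate by $\ell_1^{-1}\bmod p$ of an interval of length about $4HL$, i.e.\ an arithmetic progression whose common difference $\ell_1^{-1}\bmod p$ is generically of size comparable to $p$. The $H$-spacing hypothesis (equivalently, disjointness of the blocks $s+\cI$) controls $\#(\cS\cap J)$ only for genuine intervals $J$ and gives no control whatsoever on the intersection of $\cS$ with such a dilated interval, so "disjointness of the blocks caps how often these land back in $\cI+\cS$" is not a valid inference: a priori a single triple $(\ell_1,\ell_2,s_1)$ could capture many $s_2$. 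Converting $(\#\cS)^2$ into $\#\cS$ requires a genuinely different idea in which one averages over the ratios $\ell_1/\ell_2$ rather than fixing a triple --- for instance via the lattices $\{(x,y)\in\Z^2:\ \ell_2x\equiv\ell_1y\pmod p\}$ of determinant $p$, whose shortest vector is essentially $(\ell_1,\ell_2)$ of length $\asymp L$, the primality of the $\ell_i$ serving to exclude ratios admitting abnormally short vectors --- combined with the block structure of $\cI+\cS$. None of this is carried out, nor is your closing assertion verified that the "equidistributed part" of $M$ is genuinely of size $(\#\cS)^2HL^3/p$. As written, the proposal establishes the lemma only with the error term $(\#\cS)^2HL\,p^{o(1)}$, which is strictly weaker than the statement and insufficient for the paper's purposes.
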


We also define
\begin{equation}
\label{eq:def U}
U=\sum_{v\in \F_p} U(v)^2,
\end{equation}
where 
\begin{equation}
\label{eq:def Uv}
U(v) =\# \left\{(u,\ell,\lambda)\in \cI \times \cL  \times \cG~:~
\frac{u +f(\lambda)}{\ell} \equiv v \pmod p\right\}.
\end{equation}

\begin{lem}
\label{lem:Uv}  For every fixed real $\varepsilon>0$ and
integer $d\ge 1$ there  are some $\delta>$ and $\eta> 0$ such that 
if 
$$T> p^\varepsilon \mand  p^{1/2-\varepsilon} \ge H \ge L
$$
then for $U$, given by~\eqref{eq:def U}
we have
$$
U\ll H   L  T^2  p^{-\eta}.
$$
\end{lem}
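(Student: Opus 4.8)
The plan is to reduce $U$ to the quantity $W$ of Lemma~\ref{lem:W}, the sole obstacle being that the set of values $\{f(\lambda):\lambda\in\cG\}$ need not be $H$-spaced. First I would record the fourth-moment structure of $U$. Writing $\cS_0=\{f(\lambda):\lambda\in\cG\}$ for the set of distinct values and $r(s)=\#\{\lambda\in\cG:f(\lambda)=s\}\le d$ for the multiplicities, we have $U=\sum_{v}U(v)^2$ with
$$
U(v)=\sum_{s\in\cS_0}r(s)\,\#\{(u,\ell)\in\cI\times\cL~:~(u+s)/\ell\equiv v \pmod p\}.
$$
Hence, for any $H$-spaced subset $\cS\subseteq\cS_0$, the corresponding partial second moment is, up to the factor $d^2$ coming from $r(s)\le d$, bounded by the $W$ attached to $\cS$, to which Lemma~\ref{lem:W} applies. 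Its hypotheses hold in our range: we may assume $L<H$, and $2HL\le 2H^2\le 2p^{1-2\varepsilon}<p$.

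The decisive input is that $\cS_0$ is \emph{almost} $H$-spaced. By Lemma~\ref{lem:G in I} (applied to an interval of length $2H\le p^{1/2-\varepsilon+o(1)}$), every block of $2H$ consecutive residues meets $\cS_0$ in at most $K=2HT/p+O(T^{1-\kappa})$ points; since $H\le p^{1/2-\varepsilon}$ and $T>p^{\varepsilon}$ the first term is dominated, so $K\ll T^{1-\kappa}$. I would then sort $\cS_0$ as $s_{(1)}<s_{(2)}<\cdots<s_{(N)}$ (with $N\le T$) and colour the points round-robin into $m=K$ classes $\cS_1,\dots,\cS_m$, placing $s_{(j)}$ into class $j\bmod m$. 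Consecutive same-class points are $s_{(j)}$ and $s_{(j+m)}$; if they were within distance $H$ then the $m+1=K+1$ points $s_{(j)},\dots,s_{(j+m)}$ would lie in a block of at most $2H$ consecutive residues, contradicting the definition of $K$. Thus each $\cS_i$ is $H$-spaced, and the colouring is balanced, $\#\cS_i=N/m+O(1)$, whence $\sum_i(\#\cS_i)^2\ll N^2/m+N\ll T^2/m+T$.

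Let $U_i(v)$ denote the contribution to $U(v)$ of those $\lambda$ with $f(\lambda)\in\cS_i$, so that $U(v)=\sum_{i=1}^m U_i(v)$. By Cauchy--Schwarz, $U=\sum_v\bigl(\sum_i U_i(v)\bigr)^2\le m\sum_i\sum_v U_i(v)^2\ll m\sum_i W_i$, where $W_i$ is the $W$ of Lemma~\ref{lem:W} for the $H$-spaced set $\cS_i$. That lemma gives $W_i\ll (\#\cS_i\,HL)^2/p+\#\cS_i\,HL\,p^{o(1)}$, and summing,
$$
U\ll \frac{(HL)^2}{p}\,m\sum_i(\#\cS_i)^2+m\,HL\,p^{o(1)}\sum_i\#\cS_i \ll \frac{(HL)^2T^2}{p}+m\,T\,HL\,p^{o(1)}.
$$
For the first term I would use $HL\le p^{1-2\varepsilon}$ to obtain $(HL)^2T^2/p\ll HLT^2p^{-2\varepsilon}$; for the second, $m\ll T^{1-\kappa}\le T\,p^{-\varepsilon\kappa}$ gives $m\,T\,HL\,p^{o(1)}\ll HLT^2p^{-\varepsilon\kappa+o(1)}$. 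Choosing $\eta$ slightly smaller than $\min(2\varepsilon,\varepsilon\kappa)$ to absorb the $p^{o(1)}$ yields $U\ll HLT^2p^{-\eta}$.

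I expect the main obstacle to be precisely the tension created by the number of colours $m$: the loss factor $m$ from Cauchy--Schwarz multiplies both terms produced by Lemma~\ref{lem:W}. The main (first) term survives only because the balanced round-robin colouring keeps $m\sum_i(\#\cS_i)^2$ at its minimal order $T^2$ rather than the crude $mT^2$, while the diagonal (second) term survives only because the equidistribution estimate forces $m\ll T^{1-\kappa}$ to be a genuine power saving below $T$. Checking that $K\ll T^{1-\kappa}$ throughout the stated range, and that the colouring classes are simultaneously $H$-spaced and balanced, is the crux; the remaining manipulations are routine bookkeeping.
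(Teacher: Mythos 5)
Your argument is sound and reaches the bound, and while it shares the paper's overall strategy --- partition the value set $\{f(\lambda):\lambda\in\cG\}$ into $H$-spaced pieces so that Lemma~\ref{lem:W} applies, with Lemma~\ref{lem:G in I} supplying the power saving --- the decomposition itself is genuinely different, and in one respect cleaner. The paper proceeds greedily: it repeatedly extracts a largest $H$-separated subset, using Lemma~\ref{lem:G in I} (via a covering argument) to guarantee each extracted set has size at least $p^{\kappa/2}$, so that after $K\le Tp^{-\kappa/2}$ steps only a small leftover set $\cS_0$ of size at most $Tp^{-\kappa/2}$ remains. That leftover is \emph{not} spaced, so the paper must treat it separately with the Ayyad--Cochrane--Zheng bound (Lemma~\ref{lem:SymCong}), and it recombines the spaced classes via Minkowski's inequality in $\ell^2$, followed by an extra Cauchy--Schwarz step to compensate for the unbalanced class sizes. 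Your sorted round-robin colouring instead produces classes that are simultaneously \emph{all} $H$-spaced and \emph{all} of balanced size $N/m+O(1)$: there is no leftover set, Lemma~\ref{lem:SymCong} is never needed, and the recombination is a single Cauchy--Schwarz with the factor $m\ll T^{1-\kappa}$, the balance giving $m\sum_i(\#\cS_i)^2\ll T^2$ exactly where the paper needs its additional Cauchy--Schwarz. The two routes are quantitatively comparable (both deliver an unspecified power saving $p^{-\eta}$), so what your version buys is economy of means; your closing diagnosis of where the real tension sits is exactly right. Your handling of the multiplicity $r(s)\le d$ and the verification of the hypotheses of Lemma~\ref{lem:W} are also fine.

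One point to patch: the $h$-spaced condition is a condition in $\F_p$, i.e.\ cyclic ($s_1+k=s_2$ with $s_1,s_2\in\F_p$), as it must be for Lemma~\ref{lem:W}, whose defining congruence is modulo $p$. Your round-robin classes are $H$-spaced only in the linear (integer) ordering of $\{0,\ldots,p-1\}$: within a class, the pair formed by the largest and the smallest element may still be at cyclic distance at most $H$ (wrap-around through $p$). This is easily repaired: delete the largest element $a_k$ from each class $\{a_1<\cdots<a_k\}$ and place each deleted element in its own singleton class. The truncated class is then cyclically $H$-spaced, since its new wrap gap satisfies
$$
p-(a_{k-1}-a_1)=\bigl(p-(a_k-a_1)\bigr)+(a_k-a_{k-1})\ge 1+(a_k-a_{k-1})>H,
$$
singletons are vacuously spaced, and the number of classes at most doubles, which changes none of your estimates. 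With that one-line fix the proof is complete.
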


\begin{proof}  Let $\cS_1$ be the largest $H$-separated subset 
of $\cF_0 = \{f(\lambda)~:~\lambda \in \cG\}$. 
By Lemma~\ref{lem:G in I} we have $\# \cS_1 \gg p^\kappa$
for some fixed $\kappa>0$. 

Inductively, we define  $\cS_{k+1}$ as the largest 
$H$-separated subset 
of 
$$
\cF_{k} = \cF_{k-1} \setminus \bigcup_{j=1}^k \cS_j, 
\qquad k =1, 2, \ldots.
$$
Clearly for some $b \in \F_p$ and a set 
$\cJ = \{b+1, \ldots, b+H\}$ we have 
$$
\# \(\cF_{k} \cap \cJ\)\ge \frac {\# \cF_{k}} {\# \cF_{k+1} }.
$$
On the other hand, by Lemma~\ref{lem:G in I}
$$
\# \(\cF_{k} \cap \cJ\) \le \# \(\cF_{1} \cap \cJ\) \ll Tp^{-\kappa}.
$$
Hence there is a partition 
$$
\cF_0 = \bigcup_{k=0}^{K} \cS_k 
$$
into disjoined sets with $K \le T p^{-\kappa/2}$ such that
\begin{itemize}
\item $\# \cS_{0} \le T p^{-\kappa/2}$,
\item  $\cS_k$ is $H$-separated  with  $\# \cS_k \ge p^{\kappa/2}$, 
$k=1, \ldots, K$.
\end{itemize}
For $k=0, \ldots, K$ we define
$$
U_k(v) =\# \left\{(u,\ell,s)\in \cI \times \cL  \times \cS_k~:~
\frac{u +s}{\ell} \equiv v \pmod p\right\}.
$$
We have 
$$
U(v) = \sum_{k=0}^K U_k(v)= U_0(v) +  \sum_{k=1}^K U_k(v).
$$
So, squaring out and summing over all $v\in \F_p$, we obtain
\begin{equation*}
\begin{split}
U & \ll    
\sum_{v\in \F_p} U_0(v)^2 +
\sum_{v\in \F_p} \(\sum_{k=1}^K  U_k(v)\)^2\\
& = \sum_{v\in \F_p} U_0(v)^2 +
\sum_{v\in \F_p} \sum_{k,m=1}^K   U_k(v) U_m(v).
\end{split}
 \end{equation*}
 Now,  changing 
the order of summation in the second term in the above  and  then using the Cauchy 
inequality, yields
\begin{equation}
\label{eq:UUkU0}
U  \ll
 V_1 + V_2^2,
 \end{equation}
 where
$$
V_1 = \sum_{v\in \F_p} U_0(v)^2 \mand 
V_2 = \sum_{k=1}^K \(\sum_{v\in \F_p} U_k(v)^2\)^{1/2}.
$$
 We have, 
\begin{equation*}
\begin{split}
V_1 
= \# \Bigl\{(u_1,u_2,\ell_1,\ell_2,s_1,s_2)&\in \cI^2 \times \cL^2  \times \cS_0^2~:\\
&~\frac{u_1 +s_1}{\ell_1} \equiv \frac{u_2 +s_2}{\ell_2}  \pmod p\Bigr\}\\
\le \max_{s_1,s_2 \in \F_p}  \# \Bigl\{(u_1,u_2,\ell_1,\ell_2)&\in \cI^2 \times \cL^2  ~:\\
&~\frac{u_1 +s_1}{\ell_1} \equiv \frac{u_2 +s_2}{\ell_2}  \pmod p\Bigr\}.
\end{split}
\end{equation*}
Since $L \le H \le p^{1/2-\varepsilon}$, by Lemma~\ref{lem:SymCong}
we obtain 
\begin{equation}
\label{eq:V1}
V_1  \ll    
(\# \cS_0)^2 H L (\log p)^2 \ll HLT^2  p^{-\varepsilon}(\log p)^2.
 \end{equation}
 
Furthermore, Lemma~\ref{lem:W} implies that for $k =1, \ldots, K$
we have 
$$
 \sum_{v\in \F_p}   U_k(v)^2 
\ll (\# \cS_k H L)^2p^{-1}  +  \# \cS_k H L p^{o(1)}.
$$
Hence, applying the Cauchy inequality, we derive
\begin{equation*}
\begin{split}
V_2  & \ll  \sum_{k=1}^K  \(\# \cS_k H Lp^{-1/2}  +  (\# \cS_k)^{1/2} H^{1/2} L^{1/2} p^{o(1)}\) \\ 
& \le H L T p^{-1/2}  +  H^{1/2} L^{1/2} p^{o(1)}\sum_{k=1}^K (\# \cS_k)^{1/2} \\
& \le H L T p^{-1/2}  +  H^{1/2} L^{1/2} p^{o(1)} \(K\sum_{k=1}^K \# \cS_k\)^{1/2}\\
& \le H L T p^{-1/2}  +  H^{1/2} K^{1/2} L^{1/2} T^{1/2} p^{o(1)}.  
\end{split}
 \end{equation*}
 Since  $K \le T p^{-\kappa/2}$  and $L \le H \le p^{1/3}$, we see that
\begin{equation}
\label{eq:V2}
V_2  \ll H LT p^{-1/2}  +   H^{1/2}  L^{1/2} T  
p^{ -\kappa/2 +o(1)} \le H^{1/2}  L^{1/2} T  p^{ -\kappa/2 +o(1)}   
 \end{equation}
 (assuming that $\kappa$ is small enough). 
Substituting~\eqref{eq:V1} and~\eqref{eq:V2} in~\eqref{eq:UUkU0}, leads us 
to the bound 
$$
U \ll HLT^2  p^{-\varepsilon}\log p +  
H   L  T^2 p^{ -\kappa +o(1)}
$$
and  the result follows.
\end{proof}

Let $E(\cG)$ be the additive energy of a 
 multiplicative subgroup   $\cG \subseteq \F_p^*$, that is
$$
E(\cG) = \# \{(\lambda_1,\mu_1,\lambda_2, \mu_2) \in\cG^4~:~
\lambda_1+ \mu_1 = \lambda_2 + \mu_2 \}.
$$
By a result of Heath-Brown and Konyagin~\cite{HBK}, 
if $\#\cG = T \le p^{2/3}$
we have 
$$
E(\cG) \ll T^{5/2}.
$$
Recently, Shkredov~\cite{Shkr1} has given an 
improvement which we present in the following 
slightly less precise form (which supreses logarithmic 
factors in $p^{o(1)}$). 

\begin{lem}
\label{lem:EG}  For  $T \le p^{2/3}$ we have
$$
E(\cG) \le 
\left\{ \begin{array}{ll}
T^{32/13} p^{o(1)},&  \textrm{if $T \le p^{1/2}$},\\
T^{31/13} p^{1/26+o(1)},&  \textrm{if $p^{1/2} < T \le p^{29/48}$},\\
T^{3} p^{-1/3+o(1)},&  \textrm{if $p^{29/48} < T \le p^{2/3}$}.\\
\end{array} \right.
$$
\end{lem}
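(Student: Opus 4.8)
The plan is to read the three-regime bound directly off Shkredov's estimates in~\cite{Shkr1}; as the surrounding text signals, the statement is by design a repackaging of his results, so the work is transcription and bookkeeping rather than fresh argument. First I would locate in~\cite{Shkr1} the relevant additive-energy bounds for multiplicative subgroups, which are stated there with explicit powers of $\log p$ (and in part through auxiliary higher-energy quantities such as the third additive energy). The present formulation simply replaces every such logarithmic factor by the blanket factor $p^{o(1)}$: since $T\le p^{2/3}$ throughout, any fixed power of $\log p$ or $\log T$ is $p^{o(1)}$ as $p\to\infty$, which is what cleans up the statement.

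The second step is to match the three ranges of $T$ against the breakpoints in Shkredov's estimate. The thresholds $p^{1/2}$ and $p^{29/48}$ are exactly the values of $T$ at which the dominant term changes: for $T\le p^{1/2}$ the subgroup is small enough that the combinatorial bound $E(\cG)\le T^{32/13}p^{o(1)}$ governs; for $p^{1/2}<T\le p^{29/48}$ the modular correction contributes the factor $p^{1/26}$; and for $p^{29/48}<T\le p^{2/3}$ the bound $T^3p^{-1/3+o(1)}$ takes over. As a consistency check I would confirm continuity at the junctions: at $T=p^{1/2}$ both $T^{32/13}$ and $T^{31/13}p^{1/26}$ equal $p^{16/13}$, while at $T=p^{29/48}$ both $T^{31/13}p^{1/26}$ and $T^3p^{-1/3}$ equal $p^{923/624}$. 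Hence the three pieces glue without a jump and partition the entire range $T\le p^{2/3}$, and each is an improvement on the Heath-Brown--Konyagin bound $E(\cG)\ll T^{5/2}$ recalled above.

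As an independent sanity check on the top regime I would verify that the elementary Fourier argument already recovers the correct value at the endpoint. Using the Gauss-sum estimate $\max_{\xi\ne 0}\left|\sum_{\lambda\in\cG}\exp(2\pi i\xi\lambda/p)\right|\le p^{1/2}$ together with Parseval's identity $\sum_{\xi}\left|\sum_{\lambda\in\cG}\exp(2\pi i\xi\lambda/p)\right|^{2}=pT$ and the expansion $E(\cG)=p^{-1}\sum_{\xi}\left|\sum_{\lambda\in\cG}\exp(2\pi i\xi\lambda/p)\right|^{4}$, one obtains $E(\cG)\le T^{4}/p+pT$, which at $T=p^{2/3}$ equals $p^{5/3}=T^{3}p^{-1/3}$. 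This argument is lossy in the interior of the third range (there $pT$ exceeds $T^{3}p^{-1/3}$), so it only certifies the corner; for the sharp exponent $-1/3$ across all of $p^{29/48}<T\le p^{2/3}$ one must still appeal to~\cite{Shkr1}.

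The main obstacle is thus not analytic depth but faithful translation: Shkredov's results may be parametrized differently from the form needed here, so the real care lies in confirming that his stated ranges and exponents correspond precisely to the three cases above and that the passage to $p^{o(1)}$ is uniform over all $T\le p^{2/3}$. Once this correspondence is pinned down the lemma follows immediately.
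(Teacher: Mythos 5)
Your proposal is correct and matches the paper exactly: the paper offers no independent proof of this lemma, but simply quotes Shkredov's additive-energy bounds from~\cite{Shkr1} ``in a slightly less precise form'' with all logarithmic factors absorbed into $p^{o(1)}$, which is precisely your transcription-and-bookkeeping plan. Your continuity checks at $T=p^{1/2}$ and $T=p^{29/48}$ and the Fourier--Parseval sanity check at $T=p^{2/3}$ are sound extras, but the substance of both arguments is the same citation.
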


. 

\section{Proofs of main results}

\subsection{Proof of Theorem~\ref{thm:Bound IG1}}

We have 
\begin{equation}
\label{eq:S W}
S_\chi(a, \cI, \cG) = \frac{1}{T} W,
\end{equation}
where
$$
W =  \sum_{x \in \cI} \sum_{\lambda, \mu  \in \cG} \overline\chi(\mu)
\chi(\mu x + a\lambda).
$$
(since  $\overline \chi(\mu) = \chi(\mu^{-1})$ for $\mu \in \F_p^*$).
Hence
$$
|W| \le   \sum_{x \in \cI} \sum_{\lambda,\mu  \in \cG} \left|
\sum_{\lambda   \in \cG}
\chi(x\mu + a\lambda)\right|.
$$
Collecting the products $\mu x$ with the same value $u \in \F_p$, 
we obtain 
$$
|W| \le \sum_{u \in \F_p} R(u) \left| \sum_{\lambda  \in \cG} 
\chi(u + a \lambda) \right|,
$$
where 
$$
R(u) = \# \{(x,\mu) \in \cI  \times \cG~:~
 \mu x= u  \}.
$$
So, by the Cauchy inequality, 
$$
|W|^2  \le  \sum_{u \in \F_p}R(u)^{2} 
\sum_{u \in \F_p} \left|\sum_{\lambda  \in \cG} 
\chi(u + a \lambda)\right|^{2} .
$$
Thus applying Lemma~\ref{lem:DavErd2} we derive 
$$
W^{2} \le  pT \sum_{u \in \F_p}R(u)^{2}. 
$$
Clearly
$$
\sum_{u \in \F_p}R(u)^{2} = Q , 
$$
where 
$$
Q = \# \{(x,y,\lambda,\mu) \in \cI \times \cI \times \cG \times \cG~:~
 \lambda x= \mu y  \}.
$$
Furthermore, it is clear that $Q = T N(\cI,\cG)$, where 
$N(\cI,\cG)$ is as in Lemma~\ref{lem:NIG}. 
Putting everything together and using the bound of 
 Lemma~\ref{lem:NIG}, we see that for any fixed $\nu\ge 1$ we have 
\begin{equation}
\label{eq:W2}
W^{2} \le  pT^2 \(  H t^{(2\nu +1)/2\nu(\nu+1)}p^{-1/2(\nu +1) + o(1)}
+ H^2 t^{1/\nu}p^{-1/\nu + o(1)}\),
\end{equation}
where 
$$
t = \max\{T, p^{1/2}\}.
$$
We can certainly assume that $T \le p^{1/2 + \varepsilon}$ as 
otherwise the result follows from the bound~\eqref{eq:CharSum}.
Thus $t \le p^{1/2 + \varepsilon}$ and we obtain 
$$
W^{2} \le  pT^2 \(  H p^{1/4\nu(\nu +1) + \varepsilon (2\nu +1)/2\nu(\nu+1) + o(1)}
+ H^2 p^{-1/2\nu + \varepsilon/\nu + o(1)}\).
$$
Since $H \ge p^\varepsilon$, taking a sufficiently large $\nu$ we can achieve the inequality
$$
H p^{1/4\nu(\nu +1) + \varepsilon (2\nu +1)/2\nu(\nu+1)}\le
H^2 p^{-1/2\nu + \varepsilon/\nu}.
$$
We can also assume that $\varepsilon< 1/3$ as otherwise the 
result follows from the Burgess bound, see~\cite[Theorem~12.6]{IwKow}, 
so the bound becomes 
$$
W^{2} \le  H^2 T^2 p^{1-1/6\nu + o(1)} \ll H^2 T^2 p^{1-1/7\nu} .
$$
Recalling~\eqref{eq:S W}, we obtain 
$$
S_\chi(a, \cI, \cG)  \ll H p^{1/2-1/7\nu} \le HT p^{-1/14\nu}
$$
for $T \ge p^{1/2-1/14\nu}$. 

\subsection{Proof of Theorem~\ref{thm:Bound IG2}}

Clearly we can assume that $H< p^{1/3}$ as otherwise the Burgess bound,
(see~\cite[Theorem~12.6]{IwKow}) implies the desired result. We can also assume
that $\varepsilon>0$ is small enough, 
thus the conditions of Lemma~\ref{lem:Uv} are satisfied.

We set 
$$
\gamma = \eta/3,
$$ 
where $\eta$ is as in Lemma~\ref{lem:Uv} (which we assume to e sufficiently 
small). 

Let  $L = Hp^{-2\gamma}$, $R = \rf{p^{\gamma}}$,and let $\cL$ be the set of primes of the interval $[L, 2L]$.

Clearly 
\begin{equation}
\label{eq:Sigma}
\bS_\chi(f, \cI, \cG) = \frac{1}{\#\cL R} 
\Sigma + O(LR T) = 
\frac{1}{\#\cL R} 
\Sigma + O(HTp^{-\gamma}),
 \end{equation}
where
\begin{equation*}
\begin{split}
\Sigma & = 
\sum_{\ell \in \cL} \sum_{r =1}^R
\sum_{x \in \cI} \sum_{\lambda \in \cG} 
\chi(x + f(\lambda) + \ell r) \\
& \le  \sum_{\ell \in \cL} 
\sum_{x \in \cI} \sum_{\lambda \in \cG} 
\left|\sum_{r =1}^R
\chi\(\frac{x + f(\lambda)}{\ell} + r\)\right|
=  \sum_{v \in \F_p} U(v)
\left|\sum_{r =1}^R
\chi\(v + r\)\right|, 
\end{split}
\end{equation*}
where $U(v)$ is given by~\eqref{eq:def Uv}. 
We now fix some integer $\nu \ge 1$ 
Writing $U(v) = U(v)^{(\nu-1)/\nu} (U(v)^2)^{1/2\nu}$
and using the H{\"o}lder inequality, we derive
\begin{equation*}
\begin{split}
\Sigma^{2\nu} & = \( \sum_{v \in \F_p} U(v)\)^{2\nu - 2}
\sum_{v \in \F_p} U(v)^2 
 \sum_{v \in \F_p}  
 \left|\sum_{r =1}^R
\chi\(v + r\)\right|^{2\nu}. 
\end{split}
\end{equation*}
We obviously have 
$$
\sum_{v \in \F_p} U(v) \le H \# \cL T \ll HLT.
$$
Hence, using Lemmas~\ref{lem:DavErd1} and~\ref{lem:Uv} we derive
$$
\Sigma^{2\nu} \ll (HLT)^{2\nu -2} HLT^2 \(R^{2\nu} p^{1/2} + R^\nu p\).
$$
Taking $\nu$ sufficiently large (depending on $\gamma$), 
we arrive to the inequality
\begin{equation}
\label{eq:Sigma nu}
\Sigma^{2\nu} \ll (HL)^{2\nu -1}T^{2\nu} R^{2\nu} p^{1/2-\eta}
=  (HLRT)^{2\nu}  (HL)^{-1}p^{1/2-\eta}.
 \end{equation}
So taking $\delta = \kappa/4$, we see that 
$$
 (HL)^{-1}p^{1/2-\eta} = H^{-2} p^{1/2-2\eta/3}\le p^{-\eta/6}.
$$
Hence we infer from~\eqref{eq:Sigma nu} that 
$\Sigma \ll  (HLRT) p^{-\eta/12\nu}$, which after substitution 
in~\eqref{eq:Sigma} concludes the proof. 

\subsection{Proof of Theorem~\ref{thm:Bound IG Expl}}

We proceed as before and use that $t,T = p^{1/2+o(1)}$, so~\eqref{eq:W2}
becomes 
$$
W^{2} \le  p^2 \(   p^{1/4 + 1/4\nu(\nu +1) + o(1)}
+  p^{1/2 -1/2\nu + o(1)}\). 
$$
Taking $\nu = 2$ we obtain 
$$
W^{2} \le  p^2 \(   p^{1/4 + 1/24+ o(1)}
+  p^{1/4 + o(1)}\) = p^{55/24+ o(1)},  
$$
which after substitution in~\eqref{eq:S W} implies the result. 

\subsection{Proof of Theorem~\ref{thm:Bound GG}}

As before, we have 
\begin{equation}
\label{eq:T W}
T_\chi(a,  \cG) = \frac{1}{T} W,
\end{equation}
where
$$
W = \sum_{\lambda,\mu, \vartheta \in \cG} \overline\chi(\vartheta)
\chi(a\vartheta + \mu + \lambda).
$$
Hence
$$
|W| \le   \sum_{\lambda,\mu  \in \cG} \left|
\sum_{\vartheta   \in \cG}\overline\chi(\vartheta)
\chi(a\vartheta  + \lambda + \mu)
\right|.
$$
Collecting the sum $\lambda+\mu$ with the same value $u \in \F_p$, 
we obtain 
$$
|W| \le \sum_{u \in \F_p} F(u) \left| \sum_{\lambda  \in \cG} 
\chi(a\vartheta + \lambda + \mu) \right|,
$$
where 
$$
F(u) = \# \{(\lambda,\mu) \in \cG^2~:~
\lambda + \mu = u  \}.
$$
So, as in the proof of Theorem~\ref{thm:Bound IG1} we obtain
$$
W^{2} \le  pT \sum_{u \in \F_p}R(u)^{2} = pT E(\cG). 
$$
Recalling Lemma~\ref{lem:EG} and using~\eqref{eq:T W}, we conclude the proof.

\subsection{Proof of Theorem~\ref{thm:up}}

Let us fix an arbitrary $\varepsilon > 0$.
Let $\ell_p$ denote the multiplicative order
of $2$ modulo $p$. We see from Theorem~\ref{thm:Bound GG} that if for a sufficiently
large prime $p$ we have $\ell_p \ge p^{13/33+\varepsilon}$ then 
$$
 \sum_{1 \le k < m \le \ell_p} 
\chi(2^m + 2^k +1) = 
 \sum_{k,m =1}^{\ell_p} 
\chi(2^m + 2^k +1) + O(\ell_p) 
= O(\ell_p^{2-\delta}). 
$$ 
Using a standard 
method of detecting primitive roots via multiplicative 
charactes,  we conclude that if  for a sufficiently
large prime $p$ we have $\ell_p \ge p^{13/33+\varepsilon}$
then $u_p \le 3$. It remains to estimate the number of 
primes $p \le Q$ with $\ell_p \ge p^{13/33+\varepsilon}$. 
Let $L = Q^{13/33+\varepsilon}$. Clearly for every such prime 
we have $p \mid W$ 
where 
$$
W = \prod_{\ell \le L} (2^\ell -1) \le 2^{L(L+1)/2}.
$$
Since $W$ has $O(\log W) = O(L^2) = O(Q^{26/33+2\varepsilon})$
prime divisors and since $\varepsilon$ is arbitrary, the result now follows. 
 
\section{Comments}

It is easy to see that the full analogues of Theorems~\ref{thm:Bound IG1}
and~\ref{thm:Bound IG2} can also be obtained for the sums
$$
\sum_{x \in \cI} \sum_{\lambda \in \cG} 
\chi\(\lambda x+ a\), \qquad  1\le a < p-1,
$$
without any changes in the proof. Using a version 
of Lemma~\ref{lem:NIG} given in~\cite[Lemma~9]{KonShp},
one can also obtain analogues of our results for sums over 
the consecutive powers $g, \ldots, g^N$ of a fixed element $g \in \F_p^*$,
provided that $N$ is smaller than the multiplicative 
order of $g$ modulo $p$ and in the same ranges as $T$ 
in Theorems~\ref{thm:Bound IG1} and~\ref{thm:Bound IG2}. 

Furthermore,  without any changes in the proof, 
Theorem~\ref{thm:Bound IG2} can extended to the double sums
$$
\sum_{x \in \cI} \sum_{u \in \cU} 
\chi\(ax+ u\), \qquad  1\le a < p-1,
$$
where $\cU \subseteq\F_p$ is an arbitrary set 
of cardinality
 $U \ge p^\varepsilon$ and an interval $\cI$ of length 
 $H \le p^{1/3}$, such that for some  $\kappa > 0$ 
 we have
$$
\# \{u \in \cU~:~ u \equiv b+x \pmod p, \text{ where } x \in \cI\}
\ll U^{1-\kappa}
$$
(which replaces Lemma~\ref{lem:G in I} in our argument).

It is also interesting to estimate sums 
\begin{equation}
\label{eq:GenSum}
\sum_{x \in \cI} \sum_{\lambda \in \cG} 
\chi\(f(x)+ \lambda \), \qquad  1\le a < p-1,
\end{equation}
with a nontrivial polynomial $f(X)\in \F_p[X]$, for  $H > p^{1/2-\eta}$ 
and $\# \cG  > p^{1/2-\eta}$ for some fixed $\eta>0$ (depending 
only on $\deg f$). To estimate these sums, one needs a nontrivial bound on the number of solutions to the congruence
$$
\lambda f(x)\equiv f(y) \pmod p, \qquad  x,y \in \cI, \  \lambda\in \cG, 
$$
which is better than $H^2$. In fact,  using some ideas 
and results of~\cite{GomShp,Shp} one can get such a bound, but not 
in a range in which the sums~\eqref{eq:GenSum} can be estimated nontrivially. 

Finally, it is interesting to investigate whether one can estimate the sums
$$
 \sum_{\lambda_1, \ldots, \lambda_\nu \in \cG} 
\chi(a + \lambda_1 +  \ldots + \lambda_\nu), \qquad  1\le a < p-1,
$$
with $\nu\ge 3$ in a shorter range than that of Theorem~\ref{thm:Bound GG}
by using bounds on the higher order additive energy of multiplicative
subgroups, see~\cite{Shkr1,Shkr2} for such bounds. Clearly, for any $\varepsilon> 0$
if $\#\cG > p^\varepsilon$ then for a sufficiently large $\nu$ such a result 
follows instantly from~\cite{BGK}, as if $\nu$ is large enough, the sums $\lambda_1 +  \ldots + \lambda_\nu$, $\lambda_1, \ldots, \lambda_\nu \in \cG$, represent each
element of $\F_p$ with the asymptotically equal frequency. We however hope that 
the approach via the higher order additive energy can lead to better estimates for smaller values of $\nu$ and $\varepsilon$. 

\section*{Acknowledgements}

During the preparation of this paper, the first author was supported by the 
NSF Grants~DMS~1301608 and by the NSF Grant~0932078000 while she was in residence at the Mathematical Science Research Institute in Berkeley, California, during the spring 2014 semester. 
This
 author would also like to thank the Mathematics Department of
the University of California at Berkeley for its hospitality.

The second author was supported  by the ARC 
Grant~DP130100237. 
This author would also to thank the Max Planck Institute for
Mathematics, Bonn, for support and hospitality during his work 
on this project.

\end{document}